\newcommand{\C}{\mathbb{C}}    
\newcommand{\N}{\mathbb{N}}    
\newcommand{\R}{\mathbb{R}}    
\newcommand{\Z}{\mathbb{Z}}    
\newcommand{\imag}{\mathrm{i}} 
\newcommand{\dR}{\mathbb{R}^d}
\newcommand{\dZ}{\mathbb{Z}^d}
\newcommand{\dlp}[1]{l_{#1}(\mathbb{Z}^d)}
\newcommand{\td}{{\boldsymbol{\delta}}}  
\newcommand{\bp}{\begin{proof}}
\newcommand{\ep}{\hfill  \end{proof} }
\newcommand{\be}{ \begin{equation} }
\newcommand{\ee}{ \end{equation} }
\newcommand{\wh}{\widehat}
\renewcommand{\le}{\leqslant}
\renewcommand{\ge}{\geqslant}
\newcommand{\bs}{\backslash}
\newcommand{\ol}{\overline}
\renewcommand{\le}{\leqslant}
\renewcommand{\ge}{\geqslant}
\newcommand{\feta}{\boldsymbol{\eta}}
\newtheorem{theorem}{Theorem}
\newtheorem{lemma}{Lemma}
\theoremstyle{remark}
\begin{document}

\title{Regularization with Multilevel Non-stationary Tight Framelets  for Image Restoration}
\author{Yan-Ran Li \thanks{$^1$College of Computer Science and Software Engineering,$^2$Guangdong Key Laboratory of Intelligent Information Processing and Shenzhen Key Laboratory of Media Security, Shenzhen University, Shenzhen, 518060, P. R. China; $^3$SZU Branch, Shenzhen Institute of Artificial Intelligence and Robotics for Society, China. Email:\texttt{: lyran@szu.edu.cn}. Research supported in part by the Shenzhen R\&D Program (JCYJ20180305124325555). }
\and
Raymond H. F. Chan \thanks{Department of Mathematics, City University of Hong Kong, Tat Chee Avenue, Kowloon Tong, Hong Kong. Research supported in part by HKRGC Grants No. CUHK14301718, CityU Grant: 9380101, CRF Grant C1007-15G, AoE/M-05/12.
Email: \texttt{rchan.sci@cityu.edu.hk}.}
\and Lixin Shen\thanks{Department of Mathematics, Syracuse University, Syracuse, NY 13244, USA. Email: \texttt{lshen03@syr.edu}. The work of L. Shen was supported in part
by the National Science Foundation under grant DMS-1913039.}
\and Xiaosheng Zhuang\thanks{Department of Mathematics, City University of Hong Kong, Tat Chee Avenue, Kowloon Tong, Hong Kong. Email: \texttt{xzhuang7@cityu.edu.hk}. Research was supported in part by the Research Grants Council of Hong Kong (Project no. CityU 11301419) and City University of Hong Kong (Project no. 7005497)} }

\maketitle

\begin{abstract}
Variational regularization models are one of the popular and efficient approaches for image restoration. The regularization functional in the model carries prior knowledge about the image to be restored.  The prior knowledge, in particular for natural images, are the first-order (i.e. variance in luminance) and second-order (i.e. contrast and texture) information. In this paper, we propose a model for image restoration, using a multilevel non-stationary tight framelet system that can capture the image's first-order and second-order information. We develop an algorithm to solve the proposed model and the numerical experiments show that the model is effective and efficient as compared to other higher-order models.
\end{abstract}


\section{Introduction}

The restoration of a degraded image may be modeled as
\begin{equation}\label{eq:model}
z = K u + \epsilon, \quad \quad
\end{equation}
where $u$ denotes the unknown image to be recovered, $K$ a blurring
matrix, $z$ an observed blurred image, and $\epsilon$ the noise.  In
general, $K$ is a singular or near-singular matrix and hence the
problem of finding the solution $u$ from   model \eqref{eq:model}
is ill-posed. To overcome the difficulties caused by the ill-posedness, regularization techniques such as total-variation regularization  and multiscale regularization are often adopted,  see \cite{chambolle:ieeeip:98,chan:sjsc:03,figueiredo:ieeeip:03,geman-geman:PAMI:84,rudin:physicaD:92}
and the references therein.  The resulting regularized image models have the following generic form
\begin{equation}\label{eq:finalmodel}
\min_{u} \{\mathcal{F}(u) + \alpha \mathcal{G}(u)\}, \quad \alpha>0
\end{equation}
where $\alpha$ is the regularization parameter, $\mathcal{F}$ represents the data fidelity term and $\mathcal{G}$ the regularization term. The fidelity term measures the closeness of the estimate obtained from \eqref{eq:finalmodel} to the data $z$ while the regularization term is used to arrive at a sensible solution. Generally speaking, model~\eqref{eq:finalmodel} integrates knowledge about how data is generated in the fidelity term $\mathcal{F}$ with the regularization  functional $\mathcal{G}$ that carries prior knowledge about the image to be restored.

Our main focus of this paper is to choose a proper regularization $\mathcal{G}$ in \eqref{eq:finalmodel} for image restoration. Here, a proper regularization  means a regularization functional that encodes prior knowledge about the image to be restored. Prior knowledge about images, in particular for natural images, includes first-order (i.e. variance in luminance) and second-order (i.e. contrast and texture) information \cite{Johnson-Baker:JOSA:2004}. One commonly used regularization term that exploits the first-order information is the bounded variation semi-norm  \cite{rudin:physicaD:92}
\begin{equation}\label{eq:TV}
\mathcal{G}(u):=\int_\Omega |\nabla u|,
\end{equation}
where the image $u$ is defined on the bounded set $\Omega \subset \mathbb{R}^d$. The corresponding model \eqref{eq:finalmodel}, referred as the total variation (TV) based image restoration model, performs incredibly well especially if the image to be reconstructed is piecewise constant. The total variation functional does not penalize discontinuities in images and thus allows us
to recover the edges of the original image. However, it does not distinguish between jumps and smooth transitions, therefore it tends to give piecewise constant images with staircase artifacts. Due to this notably staircase phenomenon, the TV-based model is not suited for reconstructing images that are not nearly piecewise constant. It was pointed out in \cite{geman-reynolds:PAMI:92} that whereas the reconstruction generated with the first-order model will display jumps, the basic geometric structure of the original intensity surfaces is missing, even if it appears in the data.  It was further mentioned that using higher order models, these artifacts from the first-order model can be eliminated and some of the fine geometric structures, particularly planar and quadric patches, of the original image can be recovered.

One of the earliest models using higher derivatives was proposed in \cite{Chambolle-Lions:NM:97} where the infimal convolution of the first and second order derivatives was proposed as regularizer
\begin{equation}\label{eq:CL-regu}
\mathcal{G}(u):=\inf_{v} \int_\Omega |\nabla u -\nabla v| + \alpha |\nabla(\nabla v)|.
\end{equation}
It approximates locally the gradient of the function $u$ by $\nabla v$, that itself has a low total variation.
Different second-order functionals for staircase reduction have been considered in other papers, for example, see   \cite{Chan-Marquina-Mulet:00,Lysaker-Lundervold-Tai:IEEEIP:2003}. Based on tensor algebra, the regularizer with derivatives of arbitrary order was introduced in \cite{Bredies-Kunisch-Pock:SIAMIS:2010}. The corresponding regularizer was called total generalized variation (TGV). In particular, the TGV of second-order is
\begin{equation}\label{eq:TGV-regu}
\mathcal{G}(u):= \min_{v} \int_\Omega (\alpha_1 |\nabla u-v| + \alpha_2 |\mathcal{E}(v)|),
\end{equation}
where the parameters $\alpha_1, \alpha_2$ are positive, and
$$
\mathcal{E}(v)=\begin{bmatrix}
\partial_1 v_1 & \frac{1}{2}(\partial_1 v_2 + \partial_2 v_1) \\
\frac{1}{2}(\partial_1 v_2 + \partial_2 v_1) & \partial_2 v_2
\end{bmatrix}
$$
with $v_1$ and $v_2$ being the components of $v$. Note that for twice differentiable $u$, $\mathcal{E}(\nabla u)$ is the Hessian of $u$. We note that the TGV of second-order \eqref{eq:TGV-regu} is similar to, but structural different from, the regularizer \eqref{eq:CL-regu}. The use of TGV and its variants in a plethora of applications has been reported in \cite{Bredies-Holler:SIAMIS2015,Setzer-Steidl-Teuber:CMS:2011} and the references therein.

Motivated from the fact that an image/signal naturally has a hierarchical structure and allows to be represented in a multiscale structure, we exploit this structure to formulate a regularization term $\mathcal{G}$ in \eqref{eq:finalmodel} that is different from the aforementioned ones. To this end, we first construct a two-level non-stationary tight framelet system that is suitable for representing images to be restored. More specifically, the tight framelet system in the first level is the directional Haar framelet (DHF) system  introduced in our recent work \cite{Li-Chan-Shen-Hsu-Tseng:SIAMIS:16} while the one in the second level is constructed from the discrete cosine transform (DCT). We then use the framelet coefficients of an image under this two-level non-stationary tight framelet system to formulate the regularization term $\mathcal{G}$. More precisely, the framelet coefficients of the image with the DHF consist of the first-order information of the image in the vertical, horizontal, and $\pm 45^{\si{\degree}}$ directions. As a result, the regularization term $\mathcal{G}$ contains not only the TV term but also ameliorates it by including the diagonal information. The coarse approximation to this image resulting from the low-pass filter of the DHF, considered as a smooth version of this image, will facilitate the extraction of the second-order information of the image. As shown in our previous work \cite{Li-Shen-Suter:IEEEIP:13}, the second-order information of the image can be reliably  extracted from the DCT-based tight framelet coefficients of this smoothed image. Our proposed regularization term $\mathcal{G}$ also includes these second-order information. We remark that the success of tight framelets have been proven to be useful in image processing, see, e.g., \cite{Cai-Dong-Osher-Shen:JAMS:2012,Chan-Riemenschneider-Shen-Shen:ACHA:04,Li-Dai-Shen:IEEECSVT:10,Li-Shen-Dai-Suter:IEEEIP:11,Li-Shen-Suter:IEEEIP:13,shen:IEEEIP:06} and the references therein. However, despite that our two-level non-stationary tight framelet system is new, the proposed regularization is also different from the existing ones in the following perspectives:
\begin{itemize}
\item Due to the DHF, our regularization assimilates the advantages of both the total variation regularization and other framelet regularizations, and remedies their drawbacks. On the one hand, the filters associated with DHF have the shortest support among all tight framelet systems, therefore, it can suppress ringing artifacts arising from other framelet regularizations. In comparison, the filters associated with the 2-dimensional orthogonal Haar wavelet have the shortest support only among all compactly supported orthogonal wavelets.
    On the other hand, the diagonal first-order information provided by the DHF can reduce the staircase artifacts (or block effect) arising from the classical TV regularization. In comparison, the 2-dimensional orthogonal Haar wavelet only provide first-order information in the vertical and horizontal directions.

\item We exploit the second-order information of the underlying image from its smoothed version rather than from the image itself. The main idea behind it is that the high frequency spatial information of the image will be suppressed in its smoothed one and therefore the second-order information of the image will be faithfully computed, in particularly, for images with high degree of noise.

\item Finally, the properties of the tight framelet can be easily exploited to develop algorithms with computational efficiency and to analyze the convergence of the resulting algorithms.
\end{itemize}

To summarize, the proposed regularizer $\mathcal{G}$ contains the first and second order information of the image to be constructed  for \eqref{eq:finalmodel}. The resulting optimization problem \eqref{eq:finalmodel} can be efficiently solved and the efficiency and accuracy of this regularizer will be confirmed for image restoration.

The rest of this paper is organized as follows. In Section \ref{sec:model-algorithm} we first briefly review the tight framelet systems,  we then propose an image restoration model regularized by a two-level non-stationary tight framelet system and develop an algorithm to solve this model.  The performance of the proposed model for image restoration is presented in Section~\ref{sec:Experiments}.



\section{Model and Algorithm with Multi-Level Non-Stationary Tight Framelets}\label{sec:model-algorithm}
This section consists of three parts. In the first part, we briefly review the multi-level non-stationary tight framelet systems. In the second part, we propose our image restoration model using a two-level non-stationary tight framelet system. In the last part, we propose an algorithm to solve the resulting optimization problem.

\subsection{Multi-Level Non-Stationary Tight Framelets}\label{sec:FrameletSystems}
Tight framelets are closely related to filter banks.
A tight framelet filter bank can be used to (sparsely) represent data sequences through its associated discrete framelet transforms as well as its underlying discrete affine system \cite{Han:MMNP:13}. Before proceeding to their connections,  let us  recall some definitions and notation first.

By $l(\Z^d)$ we denote the set of all sequences and $\dlp{0}$ the set of all finitely supported sequences. A \emph{filter} or {\it mask} $h=\{h(k)\}_{k\in \dZ}: \dZ\rightarrow \C$ on $\dZ$ is a sequence in $\dlp{0}$. For a filter $h\in \dlp{0}$, its \emph{Fourier series} is defined to be $\wh{h}(\xi):=\sum_{k\in \dZ} h(k)e^{-\imag k\cdot\xi}$ for $\xi\in \dR$, which is a $2\pi\dZ$-periodic trigonometric polynomial. In particular, by $\td$ we denote \emph{the Dirac sequence} such that $\td(0)=1$ and $\td(k)=0$ for all $k\in\dZ\bs\{0\}$. Throughout the paper, we assume  the tight framelets are \emph{dyadic dilated}, that is, the \emph{dilation matrix} is $2I_d$ with $I_d$ the $d\times d $ identity matrix.

For filters $\tau_0,\tau_1,\ldots,\tau_s\in \dlp{0}$,
we say that a filter bank $\{\tau_0;\tau_1,\ldots,\tau_s\}$ is \emph{a ($d$-dimension dyadic) tight framelet filter bank} if
\be \label{tffb}
\sum_{\ell=0}^s \wh{\tau_\ell}(\xi)\ol{\wh{\tau_\ell}(\xi+\pi \omega)}=\td(\omega),\qquad \forall\, \xi\in \dR, \omega\in \{0,1\}^d,
\ee
where for a number $x\in\C$, $\bar{x}$ denotes its complex conjugate. Equation \eqref{tffb} is equivalent to the \emph{perfect reconstruction property} of the discrete framelet transforms associated with the filter bank $\{\tau_0;\tau_1,\ldots,\tau_s\}$ (\cite[Theorems 1.1.1 and 1.1.4]{Han:book}). The filter $\tau_0$ is usually a \emph{low-pass filter} satisfying $\wh{\tau_0}(0)=1$ while $\tau_\ell$'s are the \emph{high-pass filters} satisfying $\wh{\tau_\ell}(0)=0$ for $\ell \ge 1$.

In practice, multi-level decomposition and reconstruction of data using \emph{discrete framelet transform} associated with tight framelet filter banks are commonly used in order to exploit the sparse property of the data.   Moreover, in signal/image processing, translation invariance property of a discrete framelet transform is desirable especially in the scenario of signal denoising/inpainting. To preserve the translation invariance property, one usually considers the redundant version of discrete framelet transform, that is,  the \emph{undecimated discrete framelet transform} (UDFmT). More precisely, denote a filter bank at level $j$ as $\feta_j:=\{\tau_0^j; \tau_1^j,\ldots, \tau_{s_j}^j\}$ and consider a sequence $\{\feta_j : j=1,\ldots, J\} =\cup_{j=1}^J \{\tau_0^j; \tau_1^j,\ldots, \tau_{s_j}^j\}$ of $J$ filter banks with $j=J\ge 1$ being the finest level and $j=1$ being the coarsest level.   Let the \emph{convolution} operation * be defined by $
[h*v](\gamma):=\sum_{k\in\Z^d} h(\gamma-k)v(k)$, for  $v\in l(\Z^d), \, h\in l_0(\Z^d),\, \gamma\in\Z^d$, and the \emph{upsampling operator} $\uparrow m$  with $m\in\N$ be given by
\[
{[v\uparrow m]}(\gamma):=
\begin{cases}
v(m^{-1}\gamma), &\mbox{if $m^{-1}\gamma\in\Z^d$};\\
0, &\mbox{otherwise}.
\end{cases}
\]
For a filter $h$, let $h^\star$ be a filter defined by $h^\star(k)=\ol{h(-k)}$, $k\in\Z^d$. Then, for a given input data sequence $v=v_J$,  the UDFmT includes (i) Decomposition:
\begin{equation}\label{fmt:dec}
v_{j-1} = v_j*((\tau_0^j)^\star\uparrow 2^{J-j}),\quad w_{j-1;\ell} = v_j*((\tau_\ell^j)^\star\uparrow 2^{J-j}),\quad \ell = 1,\ldots,s_j,\quad j = J,\ldots, 1,
\end{equation}
and  (ii) Reconstruction:
\begin{equation}\label{fmt:rec}
v_j = v_{j-1}*(\tau_0^{j}\uparrow 2^{J-j})+\sum_{\ell=1}^sw_{j-1;\ell}*(\tau_\ell^j\uparrow 2^{J-j}),\quad  j = 1,\ldots, J.
\end{equation}
One can show that if each filter bank $\{\tau_0^j;\tau_1^j,\ldots, \tau^j_{s_j}\}$ satisfies  the \emph{partition of unity condition}:
$\sum_{\ell=0}^{s_j} |\wh{\tau_\ell^j}(\xi)|^2 = 1$, $\xi\in\R^d$,
then any input data sequence  $v\in l(\Z^d)$ can be perfectly reconstructed via \eqref{fmt:rec} from its framelet coefficient sequences $\{v_0\}\cup\{w_{j;\ell} : \ell=1,\ldots, s_j\}_{j=1}^J$ decomposed from \eqref{fmt:dec}. The framelet system associated with such a sequence $\{\feta_j : j=1,\ldots, J\}$ is then called a \emph{multi-level non-stationary tight framelet system}.

In this paper, we consider $J=2$, that is, two-level non-stationary tight framelet system.  One can of course consider $J>2$. However, in terms of efficiency and simplicity, $J=2$ is the best choice for the development of this paper.

\subsection{Regularization with a Two-level Non-stationary Tight Framelet System}
In this subsection, we integrate two different tight framelet systems as a two-level non-stationary tight framelet system which will be exploited for the optimization problem~\eqref{eq:finalmodel}.

The tight framelets in the first level is the directional Haar framelet (DHF) system proposed in \cite{Li-Chan-Shen-Hsu-Tseng:SIAMIS:16}. The filters associated with this DHF are
\begin{equation*}\label{eq:dir-Haar}
\begin{array}{llll}
\tau_0=\frac{1}{4}\begin{bmatrix}
          1 & 1  \\
          1 & 1
        \end{bmatrix},&
\tau_1=\frac{1}{4}\begin{bmatrix}
          1 & 0  \\
          0 & -1
        \end{bmatrix}, &
\tau_2=\frac{1}{4}\begin{bmatrix}
          0 & -1  \\
          1 & 0
        \end{bmatrix}, &
\tau_3=\frac{1}{4}\begin{bmatrix}
          1 & -1  \\
          0 & 0
        \end{bmatrix}, \\
        &
\tau_4=\frac{1}{4}\begin{bmatrix}
          1 & 0  \\
          -1 & 0
        \end{bmatrix}, &
\tau_5=\frac{1}{4}\begin{bmatrix}
          0 & 0  \\
          1 & -1
        \end{bmatrix}, &
\tau_6=\frac{1}{4}\begin{bmatrix}
          0 & 1  \\
          0 & -1
        \end{bmatrix}.
\end{array}
\end{equation*}
As two-dimensional filters, the indices of the entries (top-left, top-right, bottom-left, and bottom-right) in each filter are $(0,0)$, $(0,1)$, $(1,0)$, and $(1,1)$, respectively. The first filter $\tau_0$  is a low-pass filter and the rest are high-pass filters that have the ability to provide directional information of an image when these filters are applied to the image. More precisely, the filters $\tau_1$ and $\tau_2$ act as the first-order difference operators in the $45^{\si{\degree}}$ and $135^{\si{\degree}}$ directions, respectively. The results of these two filters convolving with an image will highlight changes in intensity of the image in these two diagonal directions. The filters $\tau_3$ and $\tau_5$ are the first-order difference operators in the horizontal direction while the filters $\tau_4$ and $\tau_6$ are the first-order difference operators in the vertical direction. The convolutions of these filters with the underlying image are the coefficients of the image under the corresponding filters, which are the multiplications of some associated transformation matrices with the image.

Now, we propose a generic regularization term based on DHF. Let $u \in \mathbb{R}^n$ be the vector representing the column-stacked version of an image. We denote by $M_\kappa$ the associated matrix representation of the filters $\tau_\kappa$, $\kappa=0,1,\ldots, 6$, under a proper boundary condition. We further denote
\begin{equation}\label{def:Harr-low-high-Matrices}
B_{1\ell}:=M_0 \quad \mbox{and} \quad B_{1h}:=[M_1^\top, \ldots,  M_6^\top]^\top.
\end{equation}
By the tight frame property of $\{\tau_0;\tau_1, \ldots, \tau_s\}$, these two matrices satisfy the following perfect reconstruction condition
$$
B^\top_{1\ell}B_{1\ell} + B^\top_{1h}B_{1h}=I.
$$
Let $\Phi_{1\Lambda}: \mathbb{R}^{6n} \to \mathbb{R}$ be defined through a function $\varphi_1: \mathbb{R}^{6} \to \mathbb{R}$ and a non-negative parameter vector $\Lambda=\begin{bmatrix}\lambda_1, \lambda_2, \ldots, \lambda_n \end{bmatrix}$ as follows
\begin{equation}\label{def:Phi1}
\Phi_{1\Lambda}(v):=\sum_{i=1}^n \lambda_i \varphi_1(v_i,v_{i+n},\ldots, v_{i+5n}).
\end{equation}
With this function $\Phi_{1\Lambda}$, we propose a functional based on DHF in the following form
\begin{equation}\label{eq:G-1}
\mathcal{G}_1(u):=\Phi_{1\Lambda}(B_{1h}u),
\end{equation}
from which the TV regularization and its variants can be derived by properly chosen $\varphi_1$ in \eqref{def:Phi1}. For example, if we choose
$\varphi_1(x_1,x_2,x_3,x_4,x_5,x_6)=|x_3|+|x_4|$, the regularization in \eqref{eq:G-1} is reduced to the so-called anisotropic TV; If we choose
$\varphi_1(x_1,x_2,x_3,x_4,x_5,x_6)=\sqrt{|x_3|^2+|x_4|^2}$,  the regularization in \eqref{eq:G-1} is reduced to the so-called isotropic TV.

We choose, in this paper,
\begin{equation}\label{eq:varphi-1-DHF}
\varphi_1(x_1,x_2,x_3,x_4,x_5,x_6)=\sqrt{|x_1|^2+|x_2|^2}+\sqrt{|x_3|^2+|x_4|^2}.
\end{equation}
One of the advantages of the regularization $\mathcal{G}_1$ with $\varphi_1$ given in \eqref{eq:varphi-1-DHF} is that it assimilates the advantages of both total variation and wavelet regularizations and remedies their drawbacks. The way of avoiding or suppressing ringing artifacts arising from wavelet regularizations is to choose a wavelet system whose filters have small supports. The filters associated with the 2-dimensional orthogonal Haar wavelet have the shortest support among all compactly supported orthogonal wavelets, but the staircase artifacts (or blocky effect) will appear in the neighborhoods of edges in the directions about $\pm 45^{\si{\degree}}$. Since $\varphi_1$ in \eqref{eq:varphi-1-DHF} includes the diagonal first-order information from the filters $\tau_1$ and $\tau_2$, the staircase artifacts can be reduced.

The tight framelet in the second level is generated from the standard $3 \times 3$ DCT-II orthogonal matrix whose three rows are $c_0=\frac{\sqrt{3}}{3}[1, 1, 1]$, $c_1=\frac{\sqrt{2}}{2}[1, 0, -1]$, and $c_2=\frac{\sqrt{6}}{6}[1, -2, 1]$. In the sequel, this system is referred to as the DCT-based tight framelet system. The filters of the DCT-based tight framelet system are
$\tau_{3i+j}=\frac{1}{3}c_i^\top c_j$  with $i,j \in \{0,1,2\}$, where $\tau_{0}$ is the low-pass filter and the others are high-pass filters. Here, for simplicity of notation, we use  $\tau_\kappa$ to denote the filters associated with both the DHF or DCT-based tight framelet. The expansions of these filters are
\begin{equation*}\label{eq:dir-DCT}
\begin{array}{rrr}
\tau_0=\frac{1}{9}\begin{bmatrix}
          1 & 1 & 1\\
          1 & 1 & 1\\
          1 & 1 & 1
        \end{bmatrix},&
\tau_1=\frac{\sqrt{6}}{18}\begin{bmatrix}
          1 & 0 & -1\\
          1 & 0 & -1\\
          1 & 0 & -1
        \end{bmatrix},&
\tau_2=\frac{\sqrt{2}}{18}\begin{bmatrix}
          1 & -2 & 1\\
          1 & -2 & 1\\
          1 & -2 & 1
        \end{bmatrix}, \\

\tau_3=\frac{\sqrt{6}}{18}\begin{bmatrix}
          1 & 1 & 1\\
          0 & 0 & 0\\
          -1&-1 & -1
        \end{bmatrix},&
\tau_4=\frac{1}{6}\begin{bmatrix}
          1 & 0 & -1\\
          0 & 0 & 0\\
          -1& 0 & 1
        \end{bmatrix},&
\tau_5=\frac{\sqrt{3}}{18}\begin{bmatrix}
          1 & -2 & 1\\
          0 &  0 & 0\\
          -1& 2 & -1
        \end{bmatrix}, \\

\tau_6=\frac{\sqrt{2}}{18}\begin{bmatrix}
          1 & 1 & 1\\
          -2 & -2 & -2\\
          1 & 1 & 1
        \end{bmatrix}, &
\tau_7=\frac{\sqrt{3}}{18}\begin{bmatrix}
          1 & 0 & -1\\
          -2 &  0 & 2\\
          1& 0 & -1
        \end{bmatrix}, &
\tau_8=\frac{1}{18}\begin{bmatrix}
          1 & -2 & 1\\
          -2 &  4 & -2\\
          1& -2 & 1
        \end{bmatrix}.
\end{array}
\end{equation*}
The filters $\tau_1$ and $\tau_3$, known as the Prewitt operator in image processing, are used to compute an approximation of the gradient (i.e., the first-order information) of the image intensity function. The convolution of $\tau_1$ (resp. $\tau_3$) with an image gives the horizontal (resp. vertical) changes of the image intensity and they compute changes of intensity with smoothing due to $\tau_1=\frac{1}{3}c_0^\top c_1$ and $\tau_3=\frac{1}{3}c_1^\top c_0$.  The filter $\tau_2$ (resp. $\tau_6$) computes the discrete second-order difference in vertical (resp. horizontal) direction  with smoothing due to $\tau_2=\frac{1}{3}c_0^\top c_2$ and $\tau_6=\frac{1}{3}c_2^\top c_0$.  The other filters $\tau_4$, $\tau_5$, $\tau_7$, and $\tau_8$ perform like discrete high-order difference operators.

We should note that the first-order derivative operators exaggerate the effects of noise while the second-order derivatives will exaggerated noise twice as much \cite{gonzalez:93}. Therefore, the applicability of the second-order derivatives is limited to images with low noise level. Motivated from the Laplacian of a Gaussian (LOG) and difference of Gaussian (DOG) operators in computer vision,  see, for example, \cite{Lowe:IJCV:2004,Marr-Hildreth:PRSL:1980}, we propose to take the second-order derivatives on the blurred or smoothed images in order to reduce the effect of the presence of noise in an image. To this end, we denote by $P_\kappa$ the matrix representation of the filters $\tau_\kappa$, $\kappa=0,1,\ldots 8$, under a proper boundary condition. Let us define
\begin{equation}\label{def:DCT-low-high-Matrices}
B_{2\ell}:=P_0 \quad \mbox{and} \quad B_{2h}:=[P_1^\top, \ldots,  P_8^\top]^\top.
\end{equation}
We have that
$$
B^\top_{2\ell}B_{2\ell} + B^\top_{2h}B_{2h}=I.
$$
Let $\Phi_{2\Theta}: \mathbb{R}^{8n} \to \mathbb{R}$ be defined through a nonnegative parameter sequence $\Theta=\{\theta_i=(\theta_{i1},\theta_{i2}, \ldots, \theta_{i8}) \in \mathbb{R}^8: 1\le i \le n\}$ with non-negative elements as follows
\begin{equation}\label{def:Phi2}
\Phi_{2\Theta}(v):=\sum_{i=1}^n  \|[\theta_{i1}v_i, \theta_{i2}v_{i+n}, \ldots, \theta_{i8}v_{i+7n}]\|_1,
\end{equation}
where $\|\cdot\|_1$ denotes the $\ell_1$ norm. With this function $\Phi_{2\Theta}$, we propose a functional based on the DCT-based tight framelet system  in the following form
\begin{equation}\label{eq:reg-DCT9}
\mathcal{G}_2(u):=\Phi_{2\Theta}(B_{2h}B_{1\ell}u),
\end{equation}
where $B_{1\ell}u$ is viewed as the smooth version of $u$.

All together, our proposed image restoration model is
\begin{equation}\label{def:OurRegu}
\min_{u} \{\mathcal{F}(u) + \mathcal{G}_1(u)+\mathcal{G}_2(u)\}.
\end{equation}
The efficiency of the regularization functional $\mathcal{G}_1(u)+\mathcal{G}_2(u)$ in \eqref{def:OurRegu} will be presented in Section~\ref{sec:Experiments} when it is compared with several possible regularization functionals formulated from the DHF and DCT-based tight framelet, and with other existing higher-order regularization functionals.

\subsection{Algorithm}

In this subsection, we specify the data fidelity $\mathcal{F}$  in \eqref{def:OurRegu}. For Gaussian noise, the natural choice for $\mathcal{F}$ is $\mathcal{F}(u)=\frac{1}{2}\|Ku-z\|^2$ where $\| \cdot \|$ denotes either the vector 2-norm or matrix 2-norm. That is, the optimization problem we consider here is
\begin{equation}\label{eq:our-opt}
\min_{u \in [0, 1]^n} \frac{1}{2}\|Ku-z\|^2 + \Phi_{1\Lambda}(B_{1h}u)+\Phi_{2\Theta}(B_{2h}B_{1\ell}u),
\end{equation}
where $\Phi_{1\Lambda}$ is given in \eqref{def:Phi1} and $\Phi_{2\Theta}$ is given in \eqref{def:Phi2}. Here, we assume that all pixel values of an image are in $[0,1]$.

We next introduce our notation and recall some necessary background from convex analysis.  The class of all lower semicontinuous convex functions  $f: \mathbb{R}^d \rightarrow (-\infty, +\infty]$ such that $\mathrm{dom} \; f:=\{x \in \mathbb{R}^d: f(x)<+\infty\} \neq \emptyset$ is denoted by $\Gamma_0(\mathbb{R}^d)$. The indicator function of a closed convex set $C$ in  $\mathbb{R}^d$  is defined, at $u \in \mathbb{R}^d$, as
$$
\iota_C(u): =\left\{
               \begin{array}{ll}
                 0, & \hbox{if $u\in C$,} \\
                 +\infty, & \hbox{otherwise.}
               \end{array}
             \right.
$$
Clearly, the indicator function $\iota_C$ is in $\Gamma_0(\mathbb{R}^d)$ for any closed nonempty convex set $C$.

For a function $f \in \Gamma_0(\mathbb{R}^d)$, the proximity operator of $f$ with parameter $\lambda$, denoted by $\mathrm{prox}_{\lambda f}$, is a mapping from $\mathbb{R}^d$ to itself, defined for a given point $x \in \mathbb{R}^d$ by
$$
\mathrm{prox}_{\lambda f} (x):=\mathop{\mathrm{argmin}} \left\{\frac{1}{2} \|u-x\|^2 + \lambda f(u): u \in \mathbb{R}^d \right\}.
$$
We also need the notation of conjugate. The conjugate of $f \in \Gamma_0(\mathbb{R}^d)$ is the function $f^* \in \Gamma_0(\mathbb{R}^d)$ defined at  $x \in \mathbb{R}^d$ by $f^*(x):=\sup \{\langle u, x\rangle -f(u): u \in \mathbb{R}^d\}$. A key property of the proximity operators of $f$ and its conjugate is
\begin{equation}\label{prox-f-f*}
\mathrm{prox}_{\lambda f} (x)+\lambda \mathrm{prox}_{\lambda^{-1} f^*} (x/\lambda)=x,
\end{equation}
which holds for all $x \in \mathbb{R}^n$ and any $\lambda>0$.

Now, we turn to the optimization problem~\eqref{eq:our-opt}.  Define
\begin{equation}\label{identify3function:1}
f(u)=\frac{1}{2}\|Ku-z\|^2, \quad g(u)=\iota_{[0,1]^n}, \quad p(s)=\Phi_{1\Lambda}(s_1)+\Phi_{2\Theta}(s_2), \quad \mbox{and} \quad
A=\begin{bmatrix}B_{1h} \\ B_{2h}B_{1\ell}\end{bmatrix},
\end{equation}
where $u\in \mathbb{R}^n$ and $s=(s_1,s_2)$ with $s_1 \in \mathbb{R}^{6n}$ and $s_2 \in \mathbb{R}^{8n}$. Then,  our optimization problem~\eqref{eq:our-opt} can be viewed as a special case of the optimization problem whose objective function is the sum of three lower semicontinuous convex functions in the form of
\begin{equation}\label{model:three-terms-general}
\min_{u \in \mathbb{R}^n} f(u)+g(u)+p(Au),
\end{equation}
where $A$ is a $d\times n$ matrix,  $f\in\Gamma_0(\mathbb{R}^n)$ is differentiable, $g \in \Gamma_0(\mathbb{R}^n)$, and $p\in \Gamma_0(\mathbb{R}^d)$.

Several algorithms have been developed for the optimization problem~\eqref{model:three-terms-general}, see, for example, \cite{Combettes-Pesquet:SVVA:2012,Condat:JOTA:2013,Li-Zhang:ACHA:2016,Yan:JSC:2018}. We adopt the algorithm given in \cite{Yan:JSC:2018} for problem~\eqref{model:three-terms-general} since it converges under a much weaker condition and can choose a larger step-size, yielding a faster convergence.  This algorithm, named as Primal-Dual Three-Operator splitting (PD3O), has the following iteration:
\begin{subequations}
    \begin{align}
    u^{k}&=\mathrm{prox}_{\gamma g}(v^k) \label{eq:Yan1}\\
    s^{k+1}&=\mathrm{prox}_{\delta p^*}\left((I-\gamma\delta AA^\top)s^k+\delta A(2u^k-v^k-\gamma \nabla f(u^k))\right) \label{eq:Yan2}\\
    v^{k+1}&=u^k-\gamma \nabla f(u^k)-\gamma A^\top s^{k+1} \label{eq:Yan3}
    \end{align}
\end{subequations}
One PD3O iteration can be viewed as an operator $\mathrm{T}_{\mathrm{PD3O}}$ such  that $(v^{k+1}, s^{k+1})=\mathrm{T}_{\mathrm{PD3O}}(v^{k}, s^{k})$.
The convergence analysis of PD3O is given in the following lemma.
\begin{lemma}[Sublinear convergence rate \cite{Yan:JSC:2018}] \label{lemma:Yan}
Let $f\in \Gamma_0(\mathbb{R}^n)$ and its gradient be Lipschitz continuous with constant $L$, let $g \in \Gamma_0(\mathbb{R}^n)$, and $p\in \Gamma_0(\mathbb{R}^d)$. Choose $\gamma$ and $\delta$ such that $\gamma < 2/L$ and $M=\frac{\gamma}{\delta}(I-\gamma\delta AA^\top)$ is positive definite. Let $(v^*, s^*)$ be any fixed point of $\mathrm{T}_{\mathrm{PD3O}}$, and $\{(v^k, s^k)\}_{k \ge 0}$ be the sequence generated by PD3O. Define $\|(v,s)\|_M:=\sqrt{\|v\|^2+\langle s, Ms \rangle}$. Then, the following statements hold.
\begin{itemize}
\item[(i)] The sequence $\{(\|(v^k,s^k)-(v^*,s^*)\|_M)\}_{k \ge 0}$ is monotonically nonincreasing.
\item[(ii)] The sequence $\{(\|(v^{k+1},s^{k+1})-(v^k,s^k)\|_M)\}_{k \ge 0}$ is monotonically nonincreasing. Moreover, $\|(v^{k+1},s^{k+1})-(v^k,s^k)\|_M^2=o\left(\frac{1}{k+1}\right)$.
\end{itemize}
\end{lemma}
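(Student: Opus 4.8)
The plan is to recognize the PD3O iteration as a fixed-point iteration of an operator that is nonexpansive in the metric induced by $M$, and then to extract both claims from a single Fej\'er-type inequality. First I would translate the three update rules into subdifferential inclusions via the characterization of the proximity operator: from $u^k=\mathrm{prox}_{\gamma g}(v^k)$ one gets $\gamma^{-1}(v^k-u^k)\in\partial g(u^k)$, the $s$-update gives a corresponding inclusion for $\partial p^*$ at $s^{k+1}$, and the $v$-update is an explicit identity. Reading these inclusions at a fixed point $(v^*,s^*)$, together with its associated $u^*$, recovers the first-order optimality conditions of \eqref{model:three-terms-general}, so a fixed point indeed encodes a minimizer. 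The analytic inputs are that $\mathrm{prox}_{\gamma g}$ and $\mathrm{prox}_{\delta p^*}$ are firmly nonexpansive and that, since $\nabla f$ is $L$-Lipschitz, it is $(1/L)$-cocoercive (Baillon--Haddad); the hypothesis $\gamma<2/L$ is exactly what makes the gradient term contribute a nonnegative quantity rather than spoiling the descent.

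Next I would establish the key inequality
$$
\|(v^{k+1},s^{k+1})-(v^*,s^*)\|_M^2 \le \|(v^k,s^k)-(v^*,s^*)\|_M^2 - \|(v^{k+1},s^{k+1})-(v^k,s^k)\|_M^2 - R_k,
$$
where $R_k\ge 0$ collects the cocoercivity remainder governed by $\gamma<2/L$. Deriving this is the technical heart: I would expand the left-hand side using $\|(v,s)\|_M^2=\|v\|^2+\langle s,Ms\rangle$, substitute the $v$-update, and use the monotonicity inequalities coming from $\partial g$, $\partial p^*$, and the cocoercivity of $\nabla f$ to cancel the mixed cross terms. The role of the metric $M=\tfrac{\gamma}{\delta}(I-\gamma\delta AA^\top)$ is precisely to absorb the coupling factor $I-\gamma\delta AA^\top$ appearing in the $s$-update, so that $\mathrm{T}_{\mathrm{PD3O}}$ becomes nonexpansive in $\|\cdot\|_M$. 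I expect the main obstacle here: one must carefully book-keep the primal--dual inner products and rely on $M$ being positive definite both to make $\|\cdot\|_M$ a genuine norm and to keep the collected remainder terms of the correct sign.

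Claim (i) is then immediate, since dropping the two nonnegative terms $\|(v^{k+1},s^{k+1})-(v^k,s^k)\|_M^2$ and $R_k$ on the right-hand side gives $\|(v^{k+1},s^{k+1})-(v^*,s^*)\|_M \le \|(v^k,s^k)-(v^*,s^*)\|_M$, i.e.\ Fej\'er monotonicity in $\|\cdot\|_M$. For the monotonicity part of (ii) I would apply the same contraction estimate not to a fixed point but to the two iterates $(v^k,s^k)$ and $(v^{k-1},s^{k-1})$, which, by nonexpansiveness of $\mathrm{T}_{\mathrm{PD3O}}$, yields
$$
\|(v^{k+1},s^{k+1})-(v^k,s^k)\|_M \le \|(v^k,s^k)-(v^{k-1},s^{k-1})\|_M.
$$

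Finally, to obtain the rate I would telescope the key inequality over $k$: the distance-to-fixed-point terms cancel and, discarding $R_k\ge 0$, one gets $\sum_{k}\|(v^{k+1},s^{k+1})-(v^k,s^k)\|_M^2 \le \|(v^0,s^0)-(v^*,s^*)\|_M^2<\infty$. Setting $a_k:=\|(v^{k+1},s^{k+1})-(v^k,s^k)\|_M^2$, this sequence is nonnegative, nonincreasing (by the previous display), and summable; such a sequence satisfies $\tfrac{k+1}{2}\,a_k\le \sum_{j\ge (k+1)/2}a_j\to 0$ as a tail of a convergent series, whence $a_k=o\!\left(\tfrac{1}{k+1}\right)$, which is exactly the asserted estimate.
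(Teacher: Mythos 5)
Your proposal is correct and takes essentially the same route as the source: the paper states this lemma without proof, citing \cite{Yan:JSC:2018}, and your sketch reconstructs precisely Yan's argument --- the Fej\'er-type inequality in the $M$-metric obtained from the monotonicity of $\partial g$ and $\partial p^*$ together with the Baillon--Haddad cocoercivity of $\nabla f$ under $\gamma<2/L$ (with $M=\frac{\gamma}{\delta}(I-\gamma\delta AA^\top)\succ 0$ absorbing the primal--dual coupling), nonexpansiveness of $\mathrm{T}_{\mathrm{PD3O}}$ giving the monotonicity of successive differences, and the standard nonincreasing-plus-summable tail estimate $\frac{k+1}{2}a_k\le\sum_{j\ge (k+1)/2}a_j\to 0$ yielding the $o\left(\frac{1}{k+1}\right)$ rate. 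No gaps worth flagging.
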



To adapt PD3O for our optimization problem~\eqref{eq:our-opt} with $f$, $g$ and $p$, and the matrix $A$ given in \eqref{identify3function:1}, some preparations are provided in the following lemmas.
\begin{lemma}\label{lemma:PHI1-prox}
Let $\delta >0$ and $\Phi_{1\Lambda}$ be given in \eqref{def:Phi1}. For any $v\in \mathbb{R}^{6n}$,  if $y=  \mathrm{prox}_{\delta^{-1} \Phi_{1\Lambda}}(v)$, then
\begin{equation}\label{eq:yi-phi1}
y_{(i)}=\mathrm{prox}_{\delta^{-1} \lambda_i \varphi_{1}}(v_{(i)}),
\end{equation}
where $y_{(i)}=\begin{bmatrix}y_{i}&y_{i+n}&\cdots&y_{i+5n}\end{bmatrix}^\top$ and $v_{(i)}=\begin{bmatrix}v_{i}&v_{i+n}&\cdots&v_{i+5n}\end{bmatrix}^\top$. Furthermore, let $y_{(ij)}$ and $v_{(ij)}$ be $y_{i+(j-1)n}$ and  $v_{i+(j-1)n}$, respectively, for $i=1,\ldots , n$ and $j=1,\ldots, 6$, then
$$
\begin{bmatrix}y_{(i1)} \\ y_{(i2)} \end{bmatrix}=\left(1-\frac{\lambda_i\delta^{-1}}{\max\{\|\begin{bmatrix}v_{(i1)} & v_{(i2)} \end{bmatrix}\|, \lambda_i\delta^{-1}\}}\right)\begin{bmatrix}v_{(i1)} \\ v_{(i2)} \end{bmatrix},
$$
where the pair $(y_{(i3)}, y_{(i4)})$ is obtained by simply replacing $(v_{(i1)}, v_{(i2)})$  in the right hand side of the above formula by  $(v_{(i3)}, v_{(i4)})$, and $y_{(i5)}=v_{(i5)}$, $y_{(i6)}=v_{(i6)}$.
\end{lemma}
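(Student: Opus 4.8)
The plan is to exploit two nested layers of separability---first across the $n$ index blocks, then within each block---and finally to invoke the closed form for the proximity operator of a scaled Euclidean norm. Throughout I treat $\mathrm{prox}_{\delta^{-1}\Phi_{1\Lambda}}(v)$ as the unique minimizer of $\frac{1}{2}\|y-v\|^2 + \delta^{-1}\Phi_{1\Lambda}(y)$ over $y \in \mathbb{R}^{6n}$.

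First I would establish the block separation asserted in \eqref{eq:yi-phi1}. The key observation is that the strided index blocks $\{i, i+n, \ldots, i+5n\}$, for $i = 1, \ldots, n$, form a partition of $\{1, \ldots, 6n\}$, so the quadratic term splits as $\|y - v\|^2 = \sum_{i=1}^n \|y_{(i)} - v_{(i)}\|^2$. Since by \eqref{def:Phi1} the penalty also splits as $\Phi_{1\Lambda}(y) = \sum_{i=1}^n \lambda_i \varphi_1(y_{(i)})$, the objective decouples into $n$ independent minimizations over the disjoint variable groups $y_{(i)}$. Each such minimization is by definition $\mathrm{prox}_{\delta^{-1}\lambda_i\varphi_1}(v_{(i)})$, which gives \eqref{eq:yi-phi1}.

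Next I would decompose within a fixed block. Writing $\mu := \lambda_i\delta^{-1}$ and using the local coordinates $(v_{(i1)}, \ldots, v_{(i6)})$, the form of $\varphi_1$ in \eqref{eq:varphi-1-DHF} shows the block objective is the sum of $\frac{1}{2}\|(y_{(i1)},y_{(i2)}) - (v_{(i1)},v_{(i2)})\|^2 + \mu\|(y_{(i1)},y_{(i2)})\|$, the analogous term in $(y_{(i3)},y_{(i4)})$, and the purely quadratic pieces $\frac{1}{2}(y_{(i5)}-v_{(i5)})^2$ and $\frac{1}{2}(y_{(i6)}-v_{(i6)})^2$. Because these three groups involve disjoint variables, the minimization decouples again; the last two pieces are minimized by $y_{(i5)} = v_{(i5)}$ and $y_{(i6)} = v_{(i6)}$, matching the claim. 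The remaining task is to evaluate $\mathrm{prox}_{\mu\|\cdot\|}$ at a point $w \in \mathbb{R}^2$. I would use the optimality condition $0 \in y - w + \mu\,\partial\|y\|$: when $y \neq 0$ we have $\partial\|y\| = \{y/\|y\|\}$, forcing $w$ to be a positive multiple of $y$ with $\|w\| = \|y\| + \mu$, which is consistent precisely when $\|w\| > \mu$ and yields $y = (1 - \mu/\|w\|)w$; when $y = 0$ the condition reduces to $\|w\| \le \mu$. Combining the two cases gives the single expression $y = (1 - \mu/\max\{\|w\|,\mu\})w$, which is exactly the stated block soft-thresholding formula with $w = (v_{(i1)}, v_{(i2)})$, and identically for $(v_{(i3)}, v_{(i4)})$.

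I do not anticipate a genuine obstacle here; the argument is a standard separability-plus-closed-form computation. The only points requiring care are bookkeeping: verifying that the strided blocks genuinely partition the coordinate set so that the quadratic term separates, and tracking which of the six local coordinates $\varphi_1$ actually penalizes---the pairs $(1,2)$ and $(3,4)$---versus which it leaves untouched, namely coordinates $5$ and $6$.
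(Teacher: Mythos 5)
Your proposal is correct and follows essentially the same route as the paper: exploit the block separability of $\Phi_{1\Lambda}$ across the $n$ strided blocks to get \eqref{eq:yi-phi1}, then the separability of $\varphi_1$ within each block, and finally apply the proximity operator of the scaled $\ell_2$ norm to the pairs $(v_{(i1)},v_{(i2)})$ and $(v_{(i3)},v_{(i4)})$ while coordinates $5$ and $6$ pass through unchanged. The only cosmetic difference is that you derive the two-dimensional norm prox from the subdifferential optimality condition, whereas the paper simply cites the known closed form.
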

\begin{proof}
\ \ The proof is based on the block separable property of $\Phi_{1\Lambda}$ in \eqref{def:Phi1}. By the definition of proximity operator and equations \eqref{def:Phi1} and \eqref{eq:varphi-1-DHF},
\begin{eqnarray*}
\mathrm{prox}_{\delta^{-1} \Phi_{1\Lambda}}(v)&=&\mathop{\mathrm{argmin}} \left\{\frac{1}{2} \|u-v\|^2 + \delta^{-1} \Phi_{1\Lambda}(u): u \in \mathbb{R}^{6n} \right\}\\
&=&\mathop{\mathrm{argmin}} \left\{\sum_{i=1}^n \frac{1}{2} \|u_{(i)}-v_{(i)}\|^2 + \delta^{-1}\lambda_i\varphi_1(u_{(i)}): u_{(i)} \in \mathbb{R}^6, i=1,\ldots,n \right\}.
\end{eqnarray*}
Hence, equation~\eqref{eq:yi-phi1} holds.  Notice that $\varphi_1$ is also a block separable function. By using the definition of proximity again and the proximity operator of the $\ell_2$ norm (see, for example, \cite{Combettes-Wajs:MMS:05,Micchelli-Shen-Xu:IP-11}), we obtain the explicit expression for $y_{(i)}$ as given above.
\end{proof}

\begin{lemma}\label{lemma:PHI2-prox}
Let $\delta >0$ and $\Phi_{2\Theta}$ be given in \eqref{def:Phi2}. For any $v\in \mathbb{R}^{8n}$,  if $y=  \mathrm{prox}_{\delta^{-1} \Phi_{2\Theta}}(v)$, then
\begin{equation}\label{eq:yi-phi2}
y_{(i)}=\mathrm{prox}_{\delta^{-1} \|\cdot\|_1\circ \mathrm{diag}(\theta_i)} (\mathrm{diag}(v_{(i)}).
\end{equation}
where $y_{(i)}=\begin{bmatrix}y_{i}&y_{i+n}&\cdots&y_{i+7n}\end{bmatrix}^\top$ and $v_{(i)}=\begin{bmatrix}v_{i}&v_{i+n}&\cdots&v_{i+7n}\end{bmatrix}^\top$. Furthermore, let $y_{(ij)}$ and $v_{(ij)}$ be $y_{i+(j-1)n}$ and  $v_{i+(j-1)n}$, respectively, for $i=1,\ldots n$ and $j=1,\ldots, 8$, then
$$
y_{(ij)} = \max\{|v_{(ij)}|-\delta^{-1}\theta_{ij}, 0\} \mathrm{sgn}(v_{(ij)}).
$$
\end{lemma}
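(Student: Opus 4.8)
The plan is to exploit the same two-level separable structure used for $\Phi_{1\Lambda}$ in Lemma~\ref{lemma:PHI1-prox}. First I would write out the defining minimization
$
\mathrm{prox}_{\delta^{-1}\Phi_{2\Theta}}(v)=\mathop{\mathrm{argmin}}\{\tfrac12\|u-v\|^2+\delta^{-1}\Phi_{2\Theta}(u):u\in\mathbb{R}^{8n}\}
$
and observe that, by \eqref{def:Phi2}, both the quadratic term and $\Phi_{2\Theta}$ decompose as sums over $i=1,\ldots,n$ of terms depending only on the block $u_{(i)}=(u_i,u_{i+n},\ldots,u_{i+7n})$. Since a sum of functions in disjoint groups of variables is minimized by minimizing each summand separately, the $8n$-dimensional proximity problem splits into $n$ independent eight-dimensional ones, which is exactly the block identity~\eqref{eq:yi-phi2}: each $y_{(i)}$ is the proximity operator of the weighted $\ell_1$ norm $\|\cdot\|_1\circ\mathrm{diag}(\theta_i)$ evaluated at $v_{(i)}$.

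Next I would push the separation one level further. The weighted $\ell_1$ norm is itself separable across its eight coordinates, since $\|\mathrm{diag}(\theta_i)x\|_1=\sum_{j=1}^{8}\theta_{ij}|x_j|$, so repeating the same disjoint-variables argument the eight-dimensional block prox decouples into eight scalar proximity problems, the $j$-th being $\mathrm{prox}_{\delta^{-1}\theta_{ij}|\cdot|}(v_{(ij)})$. Finally I would invoke the standard closed form for the proximity operator of a scaled absolute value (the soft-thresholding, or shrinkage, operator): for $c\ge 0$ and $t\in\R$ one has $\mathrm{prox}_{c|\cdot|}(t)=\max\{|t|-c,0\}\,\mathrm{sgn}(t)$. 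Applying this with $c=\delta^{-1}\theta_{ij}$ produces precisely the stated formula $y_{(ij)}=\max\{|v_{(ij)}|-\delta^{-1}\theta_{ij},0\}\,\mathrm{sgn}(v_{(ij)})$, as the same references cited in Lemma~\ref{lemma:PHI1-prox} supply this scalar computation.

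As for obstacles, there is essentially none of substance: the argument runs parallel to Lemma~\ref{lemma:PHI1-prox}, simply trading the $\ell_2$-ball shrinkage of the pairs $(v_{(i1)},v_{(i2)})$ and $(v_{(i3)},v_{(i4)})$ for coordinatewise soft-thresholding of all eight entries. The only point deserving a line of care is a vanishing weight $\theta_{ij}=0$, where the threshold is zero and the formula correctly returns $y_{(ij)}=v_{(ij)}$; thus the nonnegativity hypothesis on $\Theta$ introduces no degeneracy. If one prefers not to cite the shrinkage formula, it follows in one line from the first-order optimality condition $0\in (t-v_{(ij)})+\delta^{-1}\theta_{ij}\,\partial|\cdot|(t)$ together with the subdifferential of $|\cdot|$, and I would include that computation only if a self-contained derivation is wanted.
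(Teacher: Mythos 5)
Your proposal is correct and follows essentially the same route as the paper's proof: the paper likewise uses the block separability of $\Phi_{2\Theta}$ to reduce the $8n$-dimensional proximity problem to the $n$ block identities \eqref{eq:yi-phi2}, and then identifies the block prox as the well-known soft-thresholding operator. Your explicit coordinatewise separation of the weighted $\ell_1$ norm and the remark on $\theta_{ij}=0$ merely spell out details the paper leaves to the cited references.
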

\begin{proof} \ \ The proof is based on the block separable property of $\Phi_{2\Theta}$ in \eqref{def:Phi2}. By the definition of proximity operator,
\begin{eqnarray*}
\mathrm{prox}_{\delta^{-1} \Phi_{2\Theta}}(v)&=&\mathop{\mathrm{argmin}} \left\{\frac{1}{2} \|u-v\|^2_2 + \delta^{-1} \Phi_{2\Theta}(u): u \in \mathbb{R}^{8n} \right\}\\
&=&\mathop{\mathrm{argmin}} \left\{\sum_{i=1}^n \frac{1}{2} \|u_{(i)}-v_{(i)}\|^2_2 + \delta^{-1}\|\mathrm{diag}(\theta_i)u_{(i)}\|_1: u_{(i)} \in \mathbb{R}^8, i=1,\ldots,n \right\}.
\end{eqnarray*}
Hence, equation~\eqref{eq:yi-phi2} holds. Furthermore, notice that $\mathrm{prox}_{\delta^{-1} \|\cdot\|_1\circ \mathrm{diag}(\theta_i)}$ is the well-known soft thresholding operator, the rest of result holds.
\end{proof}

\begin{lemma}\label{lemma:Phi1+Phi2}
Let $\Phi_{1\Lambda}$ be given in \eqref{def:Phi1} and $\Phi_{2\Theta}$ be given in \eqref{def:Phi2}. For any $v\in \mathbb{R}^{14n}$, write $v=(v_1,v_2)$ with $v_1 \in \mathbb{R}^{6n}$ and $v_2 \in \mathbb{R}^{8n}$, and  define $p(v)=\Phi_{1\Lambda}(v_1)+\Phi_{2\Theta}(v_2)$. Then, for any $\delta>0$,
$$
\mathrm{prox}_{\delta^{-1} p}(v)=\mathrm{prox}_{\delta^{-1} \Phi_{1\Lambda}}(v_1) \times \mathrm{prox}_{\delta^{-1} \Phi_{2\Theta}}(v_2).
$$
\end{lemma}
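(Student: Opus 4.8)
The plan is to invoke the defining variational problem of the proximity operator and to exploit the fact that both the quadratic coupling term and the function $p$ are additively separable with respect to the partition $v=(v_1,v_2)$. This is exactly the block-separability principle already used in the proofs of Lemmas~\ref{lemma:PHI1-prox} and~\ref{lemma:PHI2-prox}, only now applied at the coarser level of the two blocks $v_1\in\mathbb{R}^{6n}$ and $v_2\in\mathbb{R}^{8n}$ rather than at the level of the individual pixels.

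First I would write, for a candidate $u=(u_1,u_2)$ with $u_1\in\mathbb{R}^{6n}$ and $u_2\in\mathbb{R}^{8n}$,
$$
\mathrm{prox}_{\delta^{-1}p}(v)=\mathop{\mathrm{argmin}}_{u_1,u_2}\left\{\tfrac{1}{2}\|u_1-v_1\|^2+\tfrac{1}{2}\|u_2-v_2\|^2+\delta^{-1}\Phi_{1\Lambda}(u_1)+\delta^{-1}\Phi_{2\Theta}(u_2)\right\},
$$
where I have used $\|u-v\|^2=\|u_1-v_1\|^2+\|u_2-v_2\|^2$ together with $p(u)=\Phi_{1\Lambda}(u_1)+\Phi_{2\Theta}(u_2)$. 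The objective is now a sum of two groups of terms, the first depending only on $u_1$ and the second only on $u_2$, with no term coupling $u_1$ and $u_2$.

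The key step is then to observe that a minimization of a sum of functions over independent variables decouples: since $\mathbb{R}^{14n}=\mathbb{R}^{6n}\times\mathbb{R}^{8n}$ and the objective splits as $F_1(u_1)+F_2(u_2)$, its infimum over the pair $(u_1,u_2)$ equals the sum of the two separate infima, and the minimizer is the pair formed by the two individual minimizers. Concretely,
$$
\mathop{\mathrm{argmin}}_{u_1}\left\{\tfrac{1}{2}\|u_1-v_1\|^2+\delta^{-1}\Phi_{1\Lambda}(u_1)\right\}=\mathrm{prox}_{\delta^{-1}\Phi_{1\Lambda}}(v_1),
$$
and likewise the $u_2$-minimization returns $\mathrm{prox}_{\delta^{-1}\Phi_{2\Theta}}(v_2)$; assembling the two components yields the claimed Cartesian-product formula. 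I would add a word noting that $\Phi_{1\Lambda},\Phi_{2\Theta}\in\Gamma_0$ makes each inner objective strongly convex, so each inner proximity operator is well defined and single-valued, and hence the decoupled minimizer is unique and the identity holds as an equality of points, not merely of sets.

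There is no genuine obstacle here: the entire content is the additive separability of the objective across the two blocks, together with the elementary fact that the $\mathop{\mathrm{argmin}}$ of a separable sum is the product of the individual $\mathop{\mathrm{argmin}}$s. The only point warranting any care is checking that the quadratic term splits along the same partition as $p$, which it does because the Euclidean norm on $\mathbb{R}^{14n}=\mathbb{R}^{6n}\times\mathbb{R}^{8n}$ is orthogonally additive across the two factors.
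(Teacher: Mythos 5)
Your proof is correct and follows exactly the argument the paper has in mind: the authors omit the proof of this lemma precisely because, as they note, it ``comes from the block separability of the function $p$,'' which is the decoupling of the quadratic term and of $p$ across the partition $\mathbb{R}^{14n}=\mathbb{R}^{6n}\times\mathbb{R}^{8n}$ that you spell out. You have simply written out in full the same separability computation already used in the proofs of Lemmas~\ref{lemma:PHI1-prox} and~\ref{lemma:PHI2-prox}, so no further comparison is needed.
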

\noindent
The result in the above lemma comes from the block separability of the function $p$. Therefore, we omit its proof here.

To apply Lemma~\ref{lemma:Yan} to  problem~\eqref{eq:our-opt}, we verify all the requirements listed in Lemma~\ref{lemma:Yan}. First, for the function $f$ in \eqref{identify3function:1}, we have that $\nabla f(u) = K^\top (Ku-z)$, the gradient of $f$ is $\|K\|^2$-Lipschitz continuous. Next, we discuss the positive definiteness of the matrix $I-\gamma \delta AA^\top$.
\begin{lemma}\label{lemma:A-norm}
Let $A$ be given in \eqref{identify3function:1}. Then, for positive numbers $\gamma$ and $\delta$, the matrix $I-\gamma \delta AA^\top$ is positive semidefinite (or definite) if and only if $\gamma\delta \le 1$ (or $\gamma\delta < 1$).
\end{lemma}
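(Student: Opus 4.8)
The plan is to reduce the semidefiniteness question to the computation of the spectral norm $\|A\|$ and then to show that $\|A\|=1$ exactly. Indeed, for $\gamma,\delta>0$ the matrix $I-\gamma\delta AA^\top$ is positive semidefinite if and only if every eigenvalue of $\gamma\delta AA^\top$ is at most $1$, i.e. $\gamma\delta\,\lambda_{\max}(AA^\top)\le 1$; since $\lambda_{\max}(AA^\top)=\|A\|^2$, this reads $\gamma\delta\le 1/\|A\|^2$, and the same computation with strict inequalities handles the definite case. Hence it suffices to prove that $\|A\|^2=\lambda_{\max}(A^\top A)=1$, after which the stated equivalences $\gamma\delta\le1$ (resp. $\gamma\delta<1$) follow at once.

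First I would compute $A^\top A$ using the two perfect reconstruction identities $B_{1\ell}^\top B_{1\ell}+B_{1h}^\top B_{1h}=I$ and $B_{2\ell}^\top B_{2\ell}+B_{2h}^\top B_{2h}=I$. From the block form of $A$ in \eqref{identify3function:1},
$$
A^\top A=B_{1h}^\top B_{1h}+B_{1\ell}^\top B_{2h}^\top B_{2h}B_{1\ell}.
$$
Substituting $B_{2h}^\top B_{2h}=I-B_{2\ell}^\top B_{2\ell}$ and $B_{1h}^\top B_{1h}=I-B_{1\ell}^\top B_{1\ell}$ collapses the cross terms and gives the clean identity
$$
A^\top A=I-(B_{2\ell}B_{1\ell})^\top(B_{2\ell}B_{1\ell}).
$$
Because $(B_{2\ell}B_{1\ell})^\top(B_{2\ell}B_{1\ell})$ is positive semidefinite, this already yields $A^\top A\preceq I$, hence $\|A\|\le1$, which settles the ``if'' directions (if $\gamma\delta\le1$, resp. $<1$, then $I-\gamma\delta AA^\top$ is semidefinite, resp. definite).

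For the reverse inequality $\|A\|\ge1$ it suffices to exhibit a nonzero $x$ with $B_{2\ell}B_{1\ell}x=0$, since then $x^\top A^\top A x=\|x\|^2$ forces $\lambda_{\max}(A^\top A)=1$. Here I would use the kernel of the DHF low-pass matrix $B_{1\ell}=M_0$: since the low-pass DHF filter assigns equal weight $\tfrac14$ to each entry of a $2\times2$ block, the column-stacked checkerboard vector $x$ with entries $(-1)^{i+j}$ is annihilated by $M_0$ (every local $2\times2$ average of a checkerboard pattern is zero). Consequently $B_{2\ell}B_{1\ell}x=0$, and combined with $A^\top A\preceq I$ this gives $\|A\|=1$, which in turn yields the ``only if'' directions by contraposition.

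The algebraic collapse leading to $A^\top A=I-(B_{2\ell}B_{1\ell})^\top(B_{2\ell}B_{1\ell})$ is routine; the real content is the lower bound $\|A\|\ge1$, that is, the singularity of the cascaded low-pass operator $B_{2\ell}B_{1\ell}$, and I expect this to be the main obstacle. The cleanest resolution is the explicit high-frequency null vector above, valid under the standard periodic boundary condition on an even grid. For other admissible boundary conditions one reasons identically by observing that the low-pass symbol $\wh{\tau_0}$ vanishes at the Nyquist frequencies, so $B_{1\ell}$, and hence $B_{2\ell}B_{1\ell}$, cannot have full rank.
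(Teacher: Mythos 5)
Your proposal is correct, and its overall skeleton is the same as the paper's: bound $\|A\|\le 1$ using the two perfect-reconstruction identities, show the norm is attained, and then translate $\lambda_{\max}(AA^\top)=\|A\|^2=1$ into the stated equivalences for $\gamma\delta$. Where you differ is precisely at the norm-attainment step, and there your version is the sound one. The paper's proof asserts $\|A\|=1$ ``since the null space of $B_{1h}$ is non-empty,'' but this is wrong as written (presumably a slip for $B_{1\ell}$): if $B_{1h}u=0$ then
\begin{equation*}
u^\top A^\top A u=\|B_{2h}B_{1\ell}u\|^2=\|B_{1\ell}u\|^2-\|B_{2\ell}B_{1\ell}u\|^2=\|u\|^2-\|B_{2\ell}B_{1\ell}u\|^2,
\end{equation*}
and for the constant vector (which spans $\ker B_{1h}$ under periodic boundary conditions) one gets $Au=0$, i.e.\ constants \emph{minimize} $\|Au\|$ rather than witnessing $\|A\|=1$. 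Your identity $A^\top A=I-(B_{2\ell}B_{1\ell})^\top(B_{2\ell}B_{1\ell})$ makes transparent that what is needed is a nontrivial null vector of the cascaded \emph{low-pass} operator, and your checkerboard vector $x$ with $M_0x=0$ supplies it: $x^\top A^\top Ax=\|B_{1h}x\|^2=\|x\|^2$. So your proof not only establishes the lemma but repairs the paper's argument. Two minor caveats: the checkerboard computation requires the periodic boundary condition on an even grid (satisfied for the $256\times256$ images used), and your closing remark that for other boundary conditions $B_{1\ell}$ ``cannot have full rank'' because $\wh{\tau_0}$ vanishes at Nyquist frequencies is heuristic rather than a proof---the symbol argument is rigorous only in the periodic or bi-infinite setting, and, e.g., under half-sample symmetric extension the checkerboard is no longer annihilated; since the paper only stipulates ``a proper boundary condition,'' it would be cleaner to state the lemma under the periodic convention you actually use.
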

\begin{proof}\ \ First, we show that  $\|A\|=1$. For any $u \in \mathbb{R}^n$, we have that
$$
u^\top A^\top A u = u^\top B_{1h}^\top B_{1h} u + u^\top B_{1\ell}^\top B_{2h}^\top B_{2h}B_{1\ell} u.
$$
Since $B_{2h}^\top B_{2h}+B_{2\ell}^\top B_{2\ell}=I$ and $B_{1h}^\top B_{1h}+B_{1\ell}^\top B_{1\ell}=I$, from the above we have that
$$
u^\top A^\top A u \le u^\top B_{1h}^\top B_{1h} u + u^\top B_{1\ell}^\top B_{1\ell} u = u^\top u.
$$
Hence $\|A\| \le 1$. Further, since the null space of $B_{1h}$ is non-empty, therefore, $\|A\|=1.$

Next, since $AA^\top$ is positive semi-definite and its largest eigenvalue is $1$, hence,  $I-\gamma \delta AA^\top$ is positive semidefinite (or definite) if and only if $\gamma\delta \le 1$ (or $\gamma\delta < 1$).
\end{proof}

The explicit form of $\mathrm{prox}_{\delta^{-1} p}$ is given in Lemma~\ref{lemma:Phi1+Phi2} with the help of Lemmas~\ref{lemma:PHI1-prox} and \ref{lemma:PHI2-prox}. Therefore, the proximity operator $\mathrm{prox}_{\delta p^*}$ can be computed via \eqref{prox-f-f*}. With the above preparation, the complete procedure for solving \eqref{eq:our-opt} based on \eqref{eq:Yan1}-\eqref{eq:Yan3} is described in Algorithm~\ref{Alg:Practical-1}. This algorithm is refereed to as TNTF (two-level non-stationary tight framelet) algorithm.
\begin{algorithm}[H]
\caption{Two-level Non-stationary Tight  Framelet (TNTF) Algorithm}
\label{Alg:Practical-1}
\begin{algorithmic}[1]
\State Set parameters    $\gamma<\frac{2}{\|K\|^2}$, $\gamma\delta<1$; pre-given parameters $\Lambda$ and $\Theta$.
\State Initialize  ${v}^0=0$ and $s^0=0$
\State Auxiliary variable $x^k$ and write $s^k=(s^k_1, s^k_2)$
\For{$k=1,2,\ldots$}
\State
\begin{subequations}
    \begin{align}
    u^{k}&=\mathrm{Proj}_{[0,1]}(v^k) \\
    x^{k}&=\gamma(B_{1h}^\top s^k_1 + B_{1\ell}^\top B_{2h}^\top s^k_2) - (2u^k-v^k)+\gamma K^\top (K u^k-z) \label{eq:Alg1-1}\\
    s^{k+1}_1&=(s^k_1-\delta B_{1h} x^k)-\delta \cdot \mathrm{prox}_{\delta^{-1}\Phi_{1\Lambda}}(\delta^{-1}(s^k_1-\delta B_{1h} x^k)) \label{eq:Alg1-2} \\
    s^{k+2}_1&=(s^k_2-\delta B_{2h} B_{1\ell} x^k)-\delta \cdot \mathrm{prox}_{\delta^{-1}\Phi_{2\Theta}}(\delta^{-1}(s^k_2-\delta B_{2h} B_{1\ell} x^k)) \label{eq:Alg1-3}\\
    v^{k+1}&=u^k-\gamma K^\top(Ku^k-z)-\gamma(B_{1h}^\top s^{k+1}_1 + B_{1\ell}^\top B_{2h}^\top s^{k+2}_2)\label{eq:Alg1-4}
    \end{align}
\end{subequations}
\EndFor
\end{algorithmic}
\end{algorithm}

The convergence analysis for Algorithm~\ref{Alg:Practical-1} is as follows.
\begin{theorem}\label{convergence-1}
Let $(v^*, s^*)$ be any fixed point of $\mathrm{T}_{PD3O}$ with $f$, $g$, $p$ and $A$  given in \eqref{identify3function:1}. Let $\{(v^k, s^k)\}_{k \ge 0}$ be the sequence generated by Algorithm~\ref{Alg:Practical-1}, where $s^{k}=(s^{k}_1, s^{k}_2)$. Choose $\gamma$ and $\delta$ such that $\gamma < 2/\|K\|^2$ and $\gamma \delta<1$. Define $M=\frac{\gamma}{\delta}(I-\gamma\delta AA^\top)$. Then,
the following statements hold.
\begin{itemize}
\item[(i)] The sequence $\{(\|(v^k,s^k)-(v^*,s^*)\|_M)\}_{k \ge 0}$ is monotonically nonincreasing.
\item[(ii)] The sequence $\{(\|(v^{k+1},s^{k+1})-(v^k,s^k)\|_M)\}_{k \ge 0}$ is monotonically nonincreasing. Moreover, $\|(v^{k+1},s^{k+1})-(v^k,s^k)\|_M^2=o\left(\frac{1}{k+1}\right)$.
\end{itemize}
\end{theorem}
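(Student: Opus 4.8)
The plan is to recognize that Theorem~\ref{convergence-1} is simply the specialization of the abstract convergence result Lemma~\ref{lemma:Yan} to the concrete data $f,g,p,A$ in \eqref{identify3function:1}. Consequently the proof splits into two independent tasks: first, verifying that the standing hypotheses of Lemma~\ref{lemma:Yan} are met by this data together with the prescribed step sizes; and second, confirming that the iterates $\{(v^k,s^k)\}$ produced by Algorithm~\ref{Alg:Practical-1} coincide exactly with the PD3O iterates generated by \eqref{eq:Yan1}--\eqref{eq:Yan3}. Once both are in place, conclusions (i) and (ii) follow verbatim, since they are stated in Lemma~\ref{lemma:Yan} in precisely the same form.

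For the first task I would check the assumptions on $f,g,p$ and the step sizes. The fidelity term $f(u)=\frac12\|Ku-z\|^2$ lies in $\Gamma_0(\mathbb{R}^n)$, is differentiable with $\nabla f(u)=K^\top(Ku-z)$, and this gradient is Lipschitz with constant $L=\|K\|^2$, as noted before Lemma~\ref{lemma:A-norm}; hence $\gamma<2/\|K\|^2$ is exactly $\gamma<2/L$. The constraint term $g=\iota_{[0,1]^n}$ is the indicator of a nonempty closed convex set and so belongs to $\Gamma_0(\mathbb{R}^n)$, while $p(s)=\Phi_{1\Lambda}(s_1)+\Phi_{2\Theta}(s_2)$ is a finite sum of weighted $\ell_1$ and $\ell_2$ norms composed with linear maps, hence lies in $\Gamma_0(\mathbb{R}^{14n})$. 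Finally, the positive definiteness of $M=\frac{\gamma}{\delta}(I-\gamma\delta AA^\top)$ is delivered by Lemma~\ref{lemma:A-norm}: since $\|A\|=1$, the condition $\gamma\delta<1$ is equivalent to $I-\gamma\delta AA^\top\succ0$, and hence to $M\succ0$.

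The second task is the substantive part, and the main obstacle is matching the dual update \eqref{eq:Yan2} to the concrete steps \eqref{eq:Alg1-2}--\eqref{eq:Alg1-3}. The projection step is immediate, since $\mathrm{prox}_{\gamma g}=\mathrm{Proj}_{[0,1]^n}$ reproduces the $u^k$ update. For the dual step I would apply Moreau's identity \eqref{prox-f-f*} with $\lambda=\delta$ and $f=p^*$ (using $p^{**}=p$) to rewrite $\mathrm{prox}_{\delta p^*}(y)=y-\delta\,\mathrm{prox}_{\delta^{-1}p}(\delta^{-1}y)$. The key algebraic manipulation is to recognize, using $A^\top s=B_{1h}^\top s_1+B_{1\ell}^\top B_{2h}^\top s_2$ and $\nabla f(u^k)=K^\top(Ku^k-z)$, that the auxiliary variable of \eqref{eq:Alg1-1} satisfies $x^k=\gamma A^\top s^k-(2u^k-v^k)+\gamma\nabla f(u^k)$, so that the argument of $\mathrm{prox}_{\delta p^*}$ in \eqref{eq:Yan2} collapses to $(I-\gamma\delta AA^\top)s^k+\delta A(2u^k-v^k-\gamma\nabla f(u^k))=s^k-\delta Ax^k$. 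Block separability of $p$ (Lemma~\ref{lemma:Phi1+Phi2}) then decouples $\mathrm{prox}_{\delta^{-1}p}$ into its $\Phi_{1\Lambda}$ and $\Phi_{2\Theta}$ components acting on $s_1^k-\delta B_{1h}x^k$ and $s_2^k-\delta B_{2h}B_{1\ell}x^k$ respectively, and substituting back through Moreau yields exactly \eqref{eq:Alg1-2}--\eqref{eq:Alg1-3}, with Lemmas~\ref{lemma:PHI1-prox} and \ref{lemma:PHI2-prox} supplying the closed forms evaluated in those lines. The primal update \eqref{eq:Alg1-4} is then literally \eqref{eq:Yan3} after the same rewriting of $A^\top s^{k+1}$.

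With the equivalence of the two iterations established and the hypotheses verified, I would close by invoking Lemma~\ref{lemma:Yan} directly, which yields both the monotone nonincrease asserted in (i) and (ii) and the $o(1/(k+1))$ rate. I expect the bookkeeping of the block/index structure of $A$ and of the two dual components $s_1,s_2$, rather than any genuine analytic difficulty, to be where care is most needed.
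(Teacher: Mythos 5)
Your proposal is correct and takes essentially the same approach as the paper: verify the hypotheses of Lemma~\ref{lemma:Yan} (the $\|K\|^2$-Lipschitz continuity of $\nabla f$ and the positive definiteness of $M$ via Lemma~\ref{lemma:A-norm} under $\gamma\delta<1$) and then invoke that lemma directly. Your additional, careful check that Algorithm~\ref{Alg:Practical-1} reproduces the PD3O iteration \eqref{eq:Yan1}--\eqref{eq:Yan3} --- using the Moreau identity \eqref{prox-f-f*}, the identification $x^k=\gamma A^\top s^k-(2u^k-v^k)+\gamma\nabla f(u^k)$, and the block separability from Lemma~\ref{lemma:Phi1+Phi2} --- is sound, but the paper performs this verification in the preparatory discussion preceding the theorem rather than inside the proof itself.
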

\begin{proof}\ \
We know that the gradient of $f$ in \eqref{identify3function:1} is $\|K\|^2$-Lipschitz continuous. By Lemma~\ref{lemma:A-norm}, the matrix $M$ is positive definite if $\gamma \delta<1$, the result of this theorem follows immediately from Lemma~\ref{lemma:Yan}.
\end{proof}

Remark: the computational cost of Algorithm 1 depends on mainly two factors: the UDFmT used in steps (\ref{eq:Alg1-1}-e) and the total number of iterations for $k$.
The UDFmT decompositions include $B_{1h}x^k$, $B_{2h}B_{1\ell} x^k$ in (\ref{eq:Alg1-2}-d) while the UDFmT reconstructions include $B_{1h}^\top s_1^k$, $B_{1\ell}^T B_{2h}^\top s_2^k$ in \eqref{eq:Alg1-1} and $B_{1h}^\top s_1^{k+1}$ and $B_{1\ell}^\top B_{2h}^\top s_2^{k+2}$ in \eqref{eq:Alg1-4}.
Since UDFmT in Algorithm 1 uses convolutions with 7 DHF filters in the first level and 9 DCT-based filters in the second level,  the UDFmT can be implemented with computational cost $O(n)$, where $n$ is the number of pixels in $u$.
For the total number of iterations $k$ in Algorithm 1, it depends on when the algorithm converges and the maximum number $\mathbb{K}$ of iterations set manually. Consequently, the total computational cost for Algorithm 1 is $O(\mathbb{K}n)$. 

Finally, we discuss how to choose the parameters in the algorithm. In our tests below we choose $\gamma = 1.99, \delta= 0.5$ to ensure $\gamma\delta<1$.
Recall from (\ref{eq:varphi-1-DHF}) that we only use the first four subband coefficients of DHF in the first level.  The corresponding regularization parameters $\lambda_i$ (defined in (\ref{def:Phi1})) are chosen to  adaptively adjust to local variations. Let $\mathcal{I}(i)$ be the set containing all indices  in the neighborhood at the $i$th pixel. Then $\lambda_i$ is set as
\begin{equation} \label{def:lambda}
\lambda_i = \frac{\lambda\times |\mathcal{I}(i)|}{\max\{\sum_{p\in
\mathcal{I}(i)} \|w_p\|,10^{-10}\} } ,
\end{equation}
where $w_p =[v_{(p1)}, v_{(p2)}]^{\top}$ or $w_p =[v_{(p3)}, v_{(p4)}]^{\top}$ are defined as in \eqref{eq:yi-phi1}.  In our tests, we choose the neighborhood of window size $3\times3$ and the parameter $\lambda$ is set by hand.

For the regularization parameters  $\theta_i$ associated with the DCT-based tight framelet coefficients (see (\ref{def:Phi2})), they are all automatically estimated and updated using the approach in our previous work \cite{Li-Shen-Suter:IEEEIP:13}. More precisely, for the regularization parameters $\theta_{i\kappa}$, $\kappa=1,\ldots, 8$, used in \eqref{def:Phi2}, they are automatically estimated according to the local variations of framelet coefficients and noise level.  Suppose the $\epsilon$ in model~\eqref{eq:model} is the Gaussian noise with the standard deviation $\sigma$. As it was done in our previous work \cite{Li-Shen-Suter:IEEEIP:13}, $\sigma_\kappa^2$ the noise variance of the framelet coefficients coming from the filter $\tau_\kappa$ at the second decomposed level is estimated as $\sigma_\kappa^2= \frac{\sigma^2}{4}\|\tau_\kappa\|^2_F$, where $\|\tau_\kappa\|_F$ is the Frobenius norm of $\tau_\kappa$;
$(\sigma_i^\kappa)^2$, the local signal variance of the $i$th framelet coefficients coming from  the filter $\kappa$th, is computed as
$(\sigma_i^\kappa)^2 = \max \{(\sum_{p\in\mathcal{I}^\kappa(i)}|v_p|/|\mathcal{I}^\kappa(i)|)^2-\sigma_\kappa^2,10^{-10}\}$,
where $\mathcal{I}^\kappa(i)$ is the set containing all indices in the neighborhood at the $i$th framelet coefficients from the filter $\kappa$.  With them,
the  regularization parameters are estimated as
\begin{equation}\label{def:theta_i_upd}
    \theta_{i\kappa} = \frac{\sqrt{2}\sigma_{\kappa}^2 }{\sigma_i^\kappa },\quad \kappa=1,\ldots, 8.
\end{equation}

To save computational cost of estimating parameters $\lambda_i$ in \eqref{def:lambda}  and $\theta_{i\kappa}$  in \eqref{def:theta_i_upd}, we only update these parameters when the iteration $k$ is a multiple of $30$ and fix them after the $200$th iteration in our numerical experiments.

\section{Experiments}\label{sec:Experiments}
In this section, we present numerical experiments to illustrate the effectiveness and efficiency of our proposed model \eqref{eq:our-opt}  for image restoration. We use the images ``Square Circle'', ``Cameraman'', and ``Montage'' of size $256 \times 256 $ as the original images $u$ in our experiments, see Figure~\ref{fig:ori_img}.  The pixel values of these images are normalized to the interval $[0,1]$. The quality of the restored image, say $\widetilde{u}$, is evaluated in terms of the peak-signal-to-noise ratio (PSNR) that is defined by
$$
\mathrm{PSNR}:=10 \log_{10} \frac{255^2 n}{\|\widetilde{u}-u\|^2},
$$
where $n$ is the number of pixels in $u$. To incorporate structural information in image comparisons, the metric of structural similarity (SSIM) \cite{Wang-Bovik-Sheikh-Simoncelli:IEEEIP:2004} of $\widetilde{u}$ to $u$ is reported as well. The higher the PSNR and SSIM, the better the quality of the restored image.


\begin{figure}[htbp]
\centering
\begin{tabular}{ccc}
\scalebox{0.5}{\includegraphics*[0,0][255,255]{./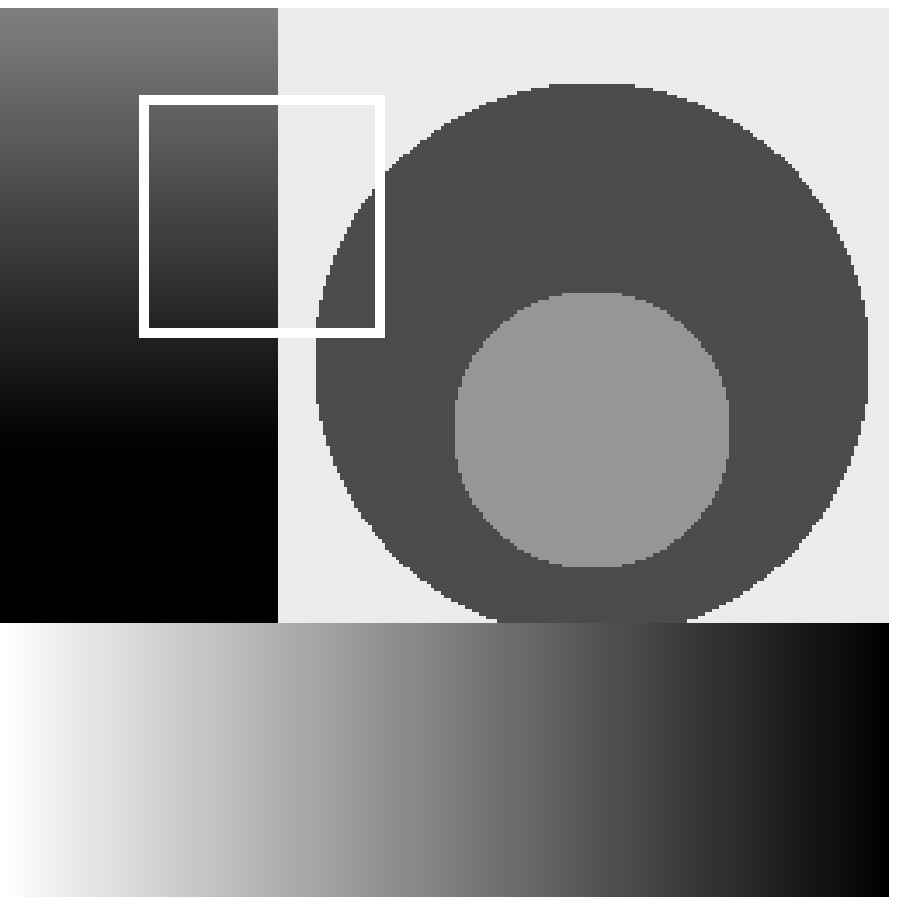}}&
\scalebox{0.5}{\includegraphics*[0,0][255,255]{./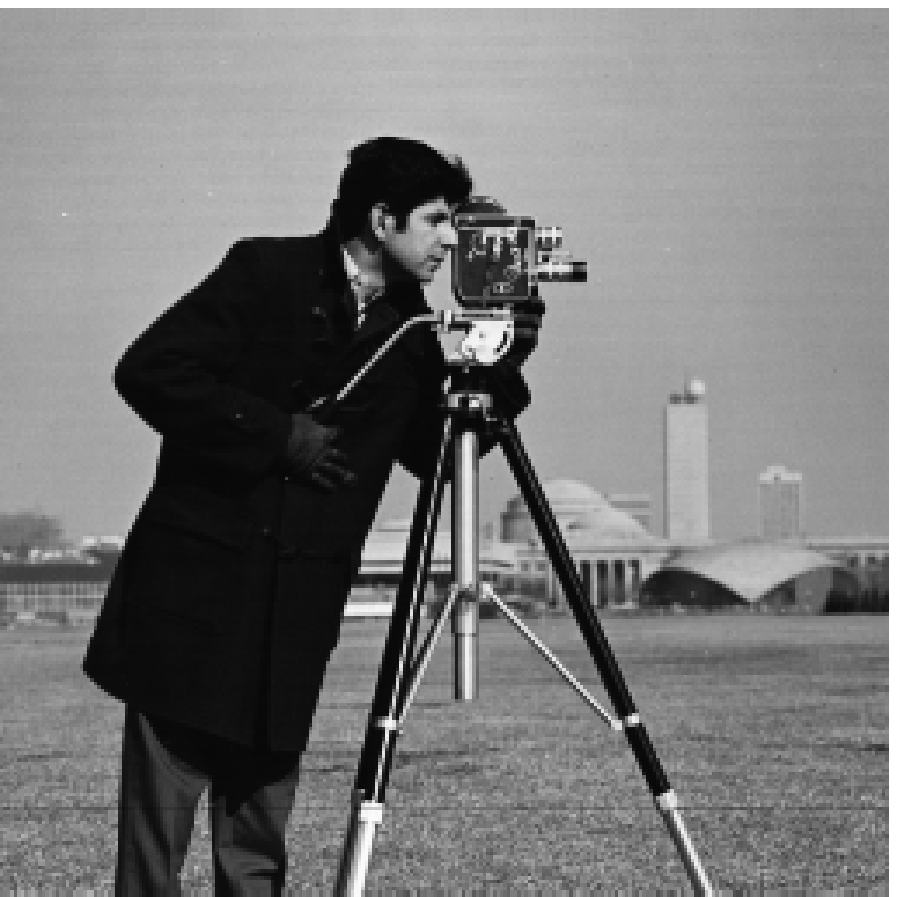}}&
\scalebox{0.5}{\includegraphics*[0,0][255,255]{./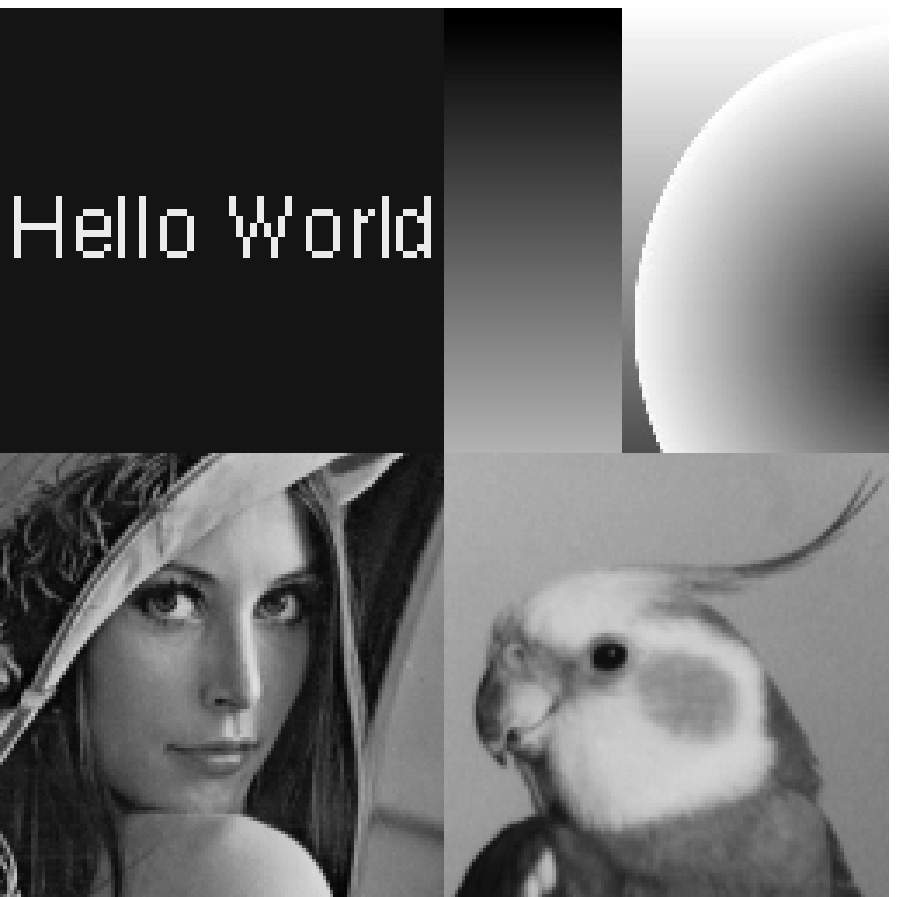}}
\\
(a)&(b)& (c)
\end{tabular}
\caption{Original image: (a) Square Circle; (b) Cameraman; (c) Montage.}
\label{fig:ori_img}
\end{figure}


Two sets of comparisons for image restoration will be conducted in this section. The first set is to compare with other tight frame regularizers. The second set is to compare with some derivative-based models.

\subsection{Comparison with Tight Frame Regularizers}
Here we compare the proposed regularization functional~\eqref{def:OurRegu} with two other tight frame regularization functionals $\mathcal{G}_{\mathrm{DCT}}$ and $\mathcal{G}_{\mathrm{DHF+DCT}}$ while using the classical TV regularizer $\mathcal{G}_{\mathrm{TV}}$ (see (\ref{eq:TV})) as a benchmark. The $\mathcal{G}_{\mathrm{DCT}}$ is defined as $\mathcal{G}_{\mathrm{DCT}}(u)=\Phi_{2\Theta}(B_{2h}u)$ which only uses the DCT-based tight framelet and takes the first- and second-order information on the image $u$, where $\Phi_{2\Theta}$ is given in \eqref{def:Phi2}.
The $\mathcal{G}_{\mathrm{DHF+DCT}}$ is defined as  $\mathcal{G}_{\mathrm{DHF+DCT}}(u)=\Phi_{1\Lambda}(B_{1h}u)+\Phi_{2\Theta}(B_{2h}u)$, where $\Phi_{1\Lambda}$ is given in \eqref{eq:G-1} with $\varphi_1$ in \eqref{eq:varphi-1-DHF}. The main difference between our proposed regularization functional~\eqref{def:OurRegu} and $\mathcal{G}_{\mathrm{DHF+DCT}}$ is that the action $B_{2h}$ takes on the smoothed image $B_{1\ell}u$ for our  regularization functional while the action $B_{2h}$ takes directly on the image $u$ for $\mathcal{G}_{\mathrm{DHF+DCT}}$.

In our experiment, the image of ``Square Circle'' in Figure \ref{fig:ori_img}(a) (which is the same as Figure~\ref{fig:circle_04}(a)) is blurred by a $5 \times 5$ average kernel (using the Matlab command {\sc fspecial('average', [5:5])}, followed by adding Gaussian noise of mean zero and standard deviation $\sigma=0.04$.  The values of the pair of (PSNR, SSIM) of these restored images by $\mathcal{G}_{\mathrm{TV}}$, $\mathcal{G}_{\mathrm{DCT}}$, $\mathcal{G}_{\mathrm{DHF+DCT}}$, and the proposed regularization functional~\eqref{def:OurRegu} are (33.66dB, 0.962), (32.54dB, 0.970), (33.14dB, 0.980), and (35.00dB, 0.980), respectively. To view the visual quality of the restored images, the square portion marked in the image \ref{fig:ori_img}(a) is displayed in  Figure~\ref{fig:circle_04}. For the regions pointed by two arrows, we can conclude that the proposed regularization functional~\eqref{def:OurRegu} leads to the restored images having better visual quality than the others. The results clearly show that our combined tight frame model is better than other intuitive tight frame models.

%
%

\begin{figure}[htbp]
\centering
\begin{tabular}{ccc}
\scalebox{2.20}{\includegraphics*[43,163][107,227]{./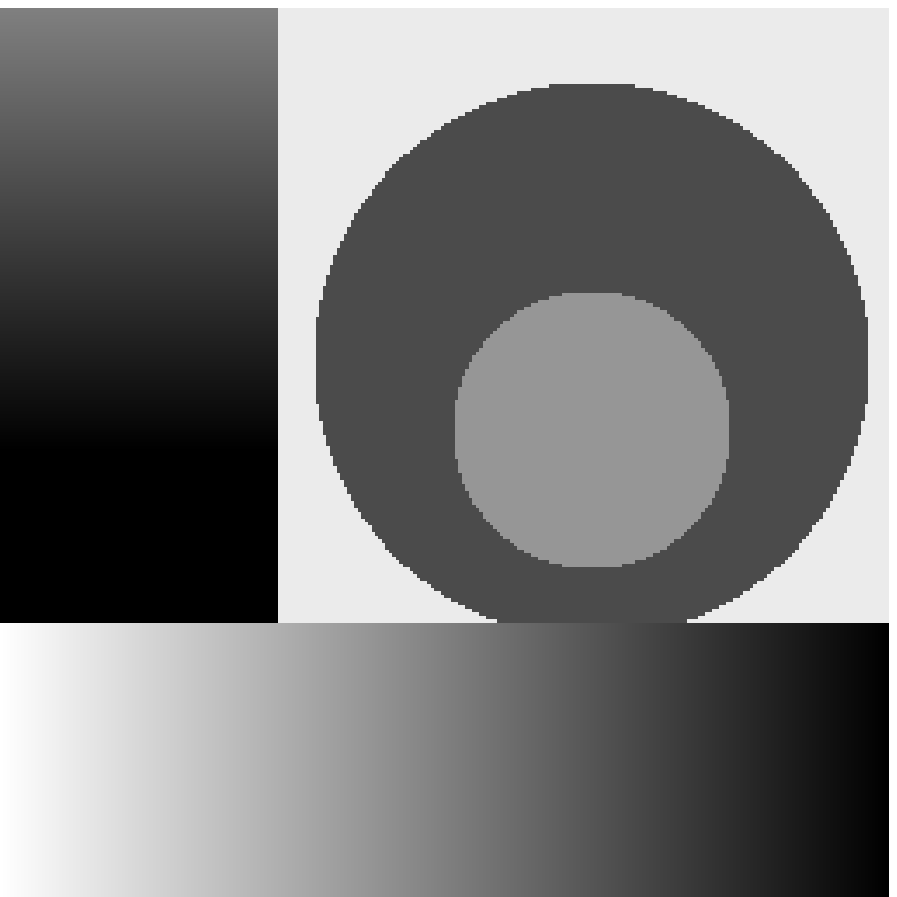}}&
\scalebox{2.20}{\includegraphics*[43,163][107,227]{./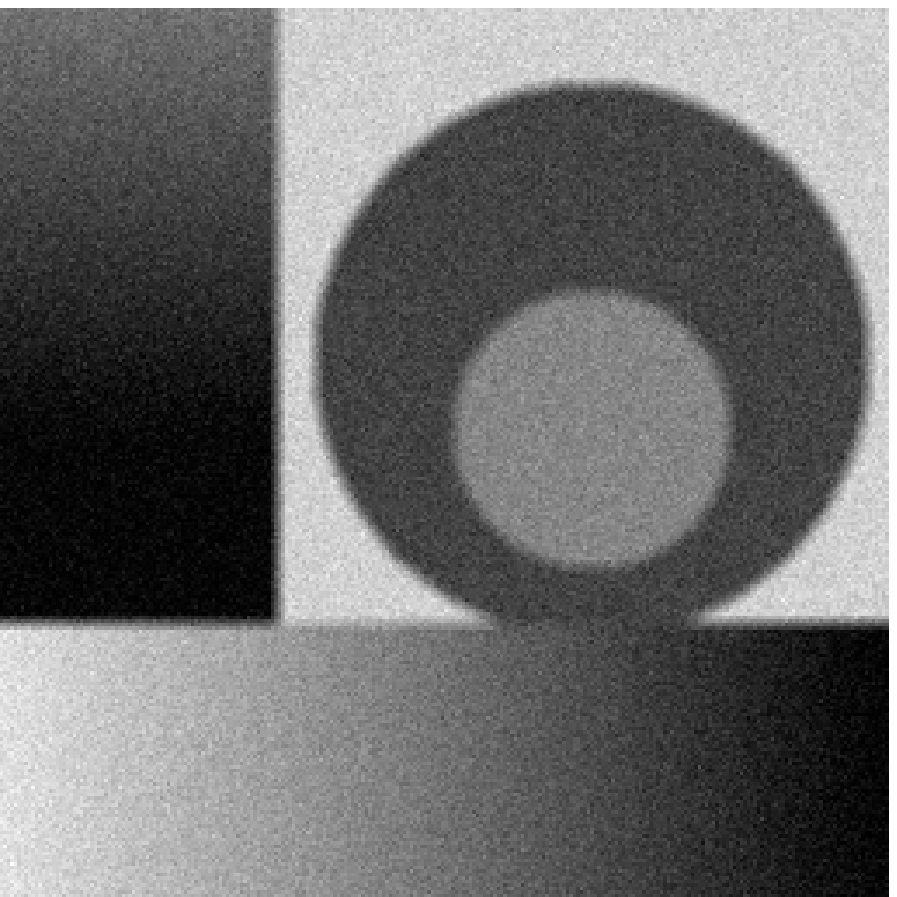}}&
\scalebox{2.20}{\includegraphics*[43,163][107,227]{./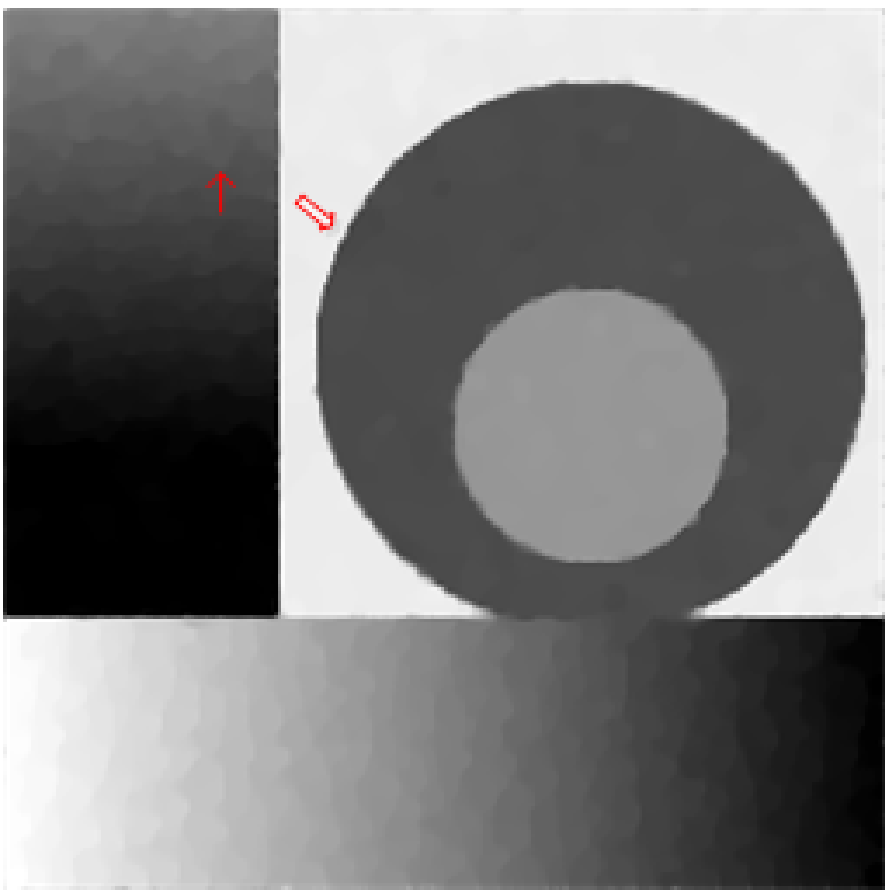}}\\
(a)&(b)&(c)\\
\scalebox{2.20}{\includegraphics*[43,163][107,227]{./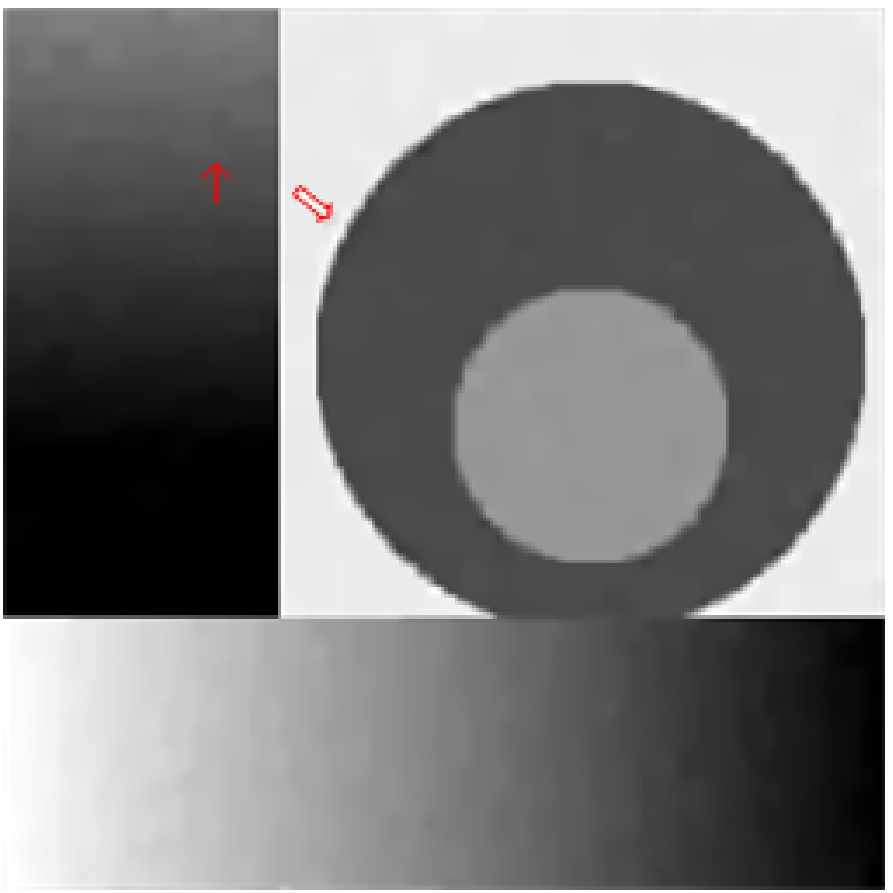}}&
\scalebox{2.20}{\includegraphics*[43,163][107,227]{./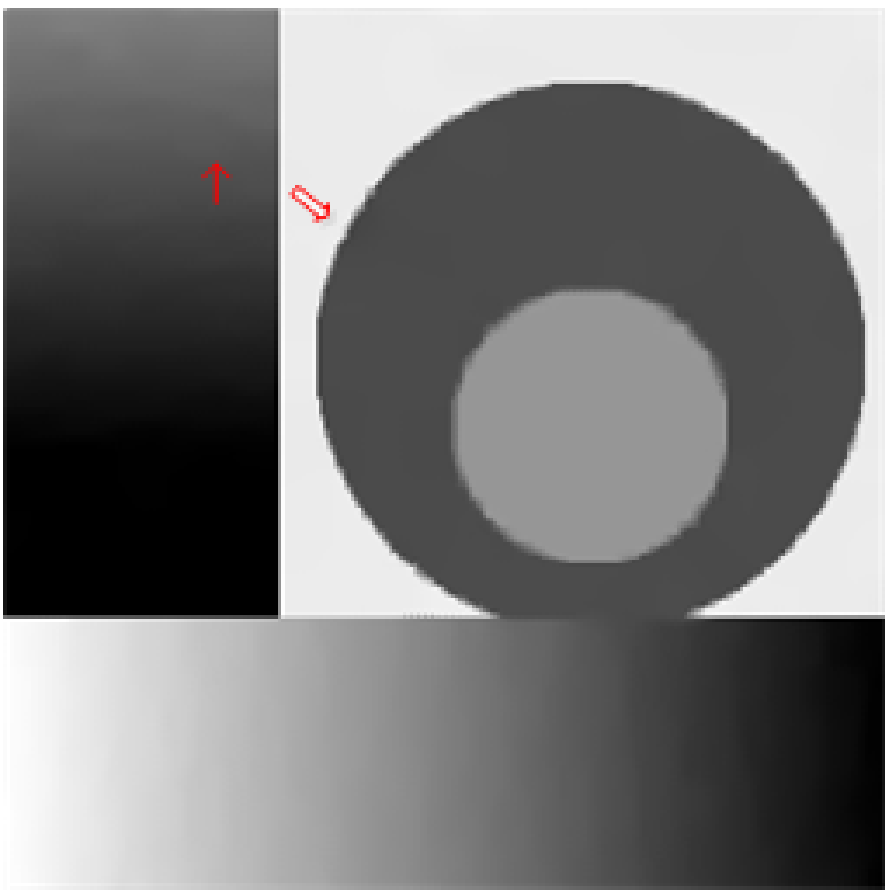}}&
\scalebox{2.20}{\includegraphics*[43,163][107,227]{./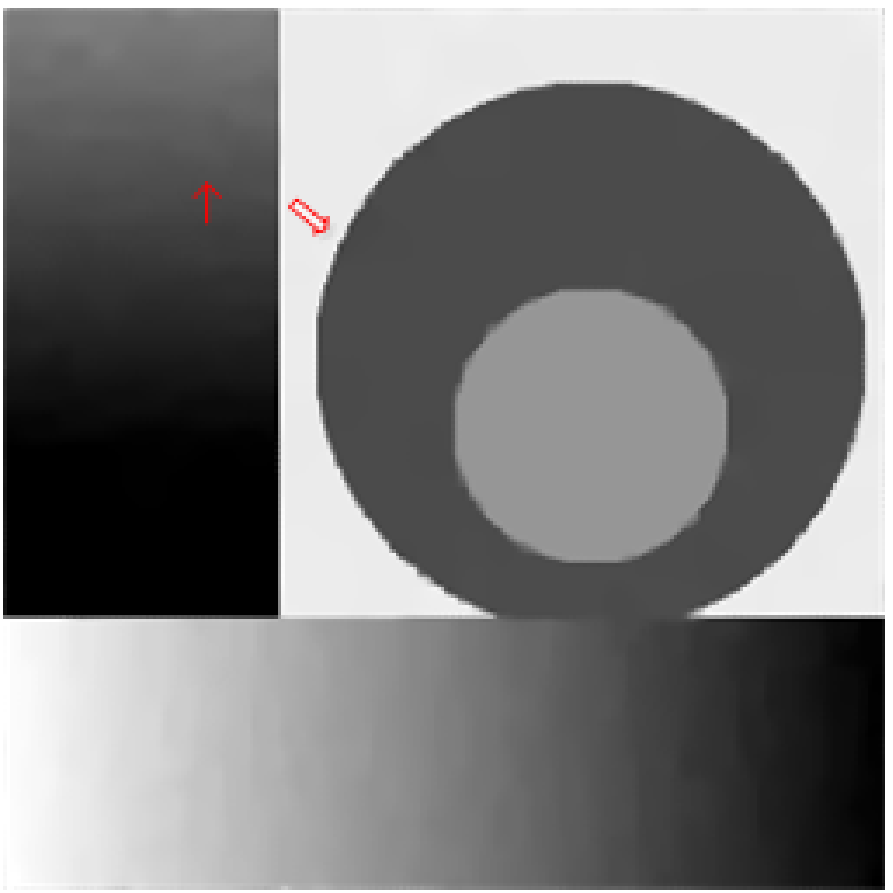}}\\
(d)&(e)&(f)
\end{tabular}
\caption{(a) A region of the image of ``Square Circle''; (b) The blurred image by the kernel {\sc fspecial('average', [5:5])} with Gaussian noise of mean zero and variance $\sigma=0.04$;  The restored images with regularization (c) TV with $\alpha = 0.04$
(see (\ref{eq:finalmodel}) and (\ref{eq:TV}));
(d) $\mathcal{G}_{\mathrm{DCT}}$; (e) $\mathcal{G}_{\mathrm{DHF+DCT}}$; and (f) the proposed regularization functional~\eqref{def:OurRegu} with $\lambda = 0.00035$ (see \eqref{def:lambda}) in the first level, respectively.}
\label{fig:circle_04}
\end{figure}

\subsection{Comparison with Derivative-based Regularizers}

Now we give a comprehensive comparison between our model \eqref{eq:our-opt} and the TV and TGV models. The TV model uses $\mathcal{G}(u)$ in \eqref{eq:TV}
as its regularization term while the TGV  model uses $\mathcal{G}(u)$ in \eqref{eq:TGV-regu} as its regularization term. The software of  TGV  model was provided by the authors in \cite{Guo-Qin-Yin:SIAMIS:14}. All algorithms are carried out until the stopping condition $\|u^{(k+1)}-u^{(k)}\|_{2}/\| u^{(k)}\|_{2}<10^{-9}$ is satisfied or the maximal iterations is $400$.

In our experiments, the test images in Figure \ref{fig:ori_img} are blurred by a $5 \times 5$ average kernel (using the Matlab command
{\sc fspecial('average', [5:5])}, followed by adding Gaussian noise of mean zero and standard deviation $\sigma$. For different values of $\sigma$, the  PSNR and SSIM values  of the restored images by TV, TGV, and our TNTF  are  reported in Table \ref{Table:GauNoise_Indexes}. The highest values of PSNR and SSIM for each $\sigma$ in each test image are highlighted. It clearly shows that our proposed TNTF performs the best in terms of both PSNR and SSIM values. We remark that the regularization parameters in the second level for our TNTF are automatically estimated based on the approach in our work \cite{Li-Shen-Suter:IEEEIP:13}.

\begin{table*}[htbp]\small
\centering \caption{The PSNR ($dB$) and SSIM  for the restored results of each algorithm with blurred
images contaminated by Gaussian noise. The test images
are blurred by the blurring kernel {\sc {fspecial('average',[5:5])}}.
}\label{Table:GauNoise_Indexes}
\begin{tabular}{c|cc|cc|cc|c}
  \hline
  \multirow{1}{*}{Algorithm} &  \multicolumn{2}{c|}{`` Square Circle''}&\multicolumn{2}{c|}{``Cameraman''}
  &\multicolumn{2}{c|}{``Montage''}& Case\\ \cline{2-7}
  & PSNR & SSIM & PSNR & SSIM & PSNR & SSIM &   \\ \cline{1-8}
TV & 35.40dB & 0.980& 26.43dB & 0.815& 28.21dB & 0.907& \\
TGV &35.58dB & 0.976& 26.27dB &0.811& 28.84dB & 0.910& STD $\sigma$=0.02  \\
TNTF & \textbf{38.19}dB & \textbf{0.992}& \textbf{27.06}dB & \textbf{0.821}& \textbf{30.19}dB & \textbf{0.92}4&     \\
  \hline
TV & 34.51dB & 0.969& 25.64dB & 0.791& 26.92dB & 0.886& \\
TGV  &34.33dB & 0.967& 25.58dB &0.788& 26.93dB & 0.884& STD $\sigma$=0.03  \\
TNTF & \textbf{36.14}dB & \textbf{0.985}& \textbf{26.01}dB & \textbf{0.800}& \textbf{28.91}dB & \textbf{0.910}&     \\
  \hline
TV & 33.66dB & 0.962& 25.10dB & 0.774& 25.94dB & 0.867& \\
TGV &33.41dB & 0.955& 24.94dB &0.770& 26.17dB & 0.875& STD $\sigma$=0.04  \\
TNTF & \textbf{35.00}dB & \textbf{0.980}& \textbf{25.31}dB & \textbf{0.784}& \textbf{27.88}dB & \textbf{0.898}&     \\
  \hline
\end{tabular}
\end{table*}

In the rest of this section, we provide qualitative results of the restored images from the above three algorithms. We first show the case for the blurred image of ``Square Circle" with Gaussian noise of STD $\sigma=0.03$ in Figure~\ref{fig:circle_03}. The noisy and blurry image is shown in Figure~\ref{fig:circle_03}(a).
The regularization parameter $\alpha$ = 0.02 (see (\ref{eq:finalmodel}) and (\ref{eq:TV})) is used for the TV model and $(\alpha_1,\alpha_2) = (0.0105, 0.026)$ (see (\ref{eq:TGV-regu})) is used for the TGV model based on the best achievable PSNR values.  The regularization parameter $\lambda=0.0002$ in the first level (see (\ref{def:lambda})) is used in our proposed model. We can observe that Figure~\ref{fig:circle_03}(b) produced by the TV has lots of staircase artifacts even without zooming in. As we can see from Figure~\ref{fig:circle_03}(c) and (d), this kind of staircase artifacts is significantly reduced by the TGV and TNTF. To have a closer look at the visual quality of the restored images by various algorithms, two parts of Figure \ref{fig:circle_03} are zoomed in and displayed in the first column of  Figure~\ref{fig:circle_03_Zoomin4}. The corresponding parts in the restored images by TV, TGV, and TNTF are shown in Figure~\ref{fig:circle_03_Zoomin4}(b), (c), and (d), respectively. We can conclude that the horizontal line in the image is well preserved by the TNTF.
\begin{figure}[htbp]
\centering
\begin{tabular}{cc}
\scalebox{0.5}{\includegraphics*[0,0][255,255]{./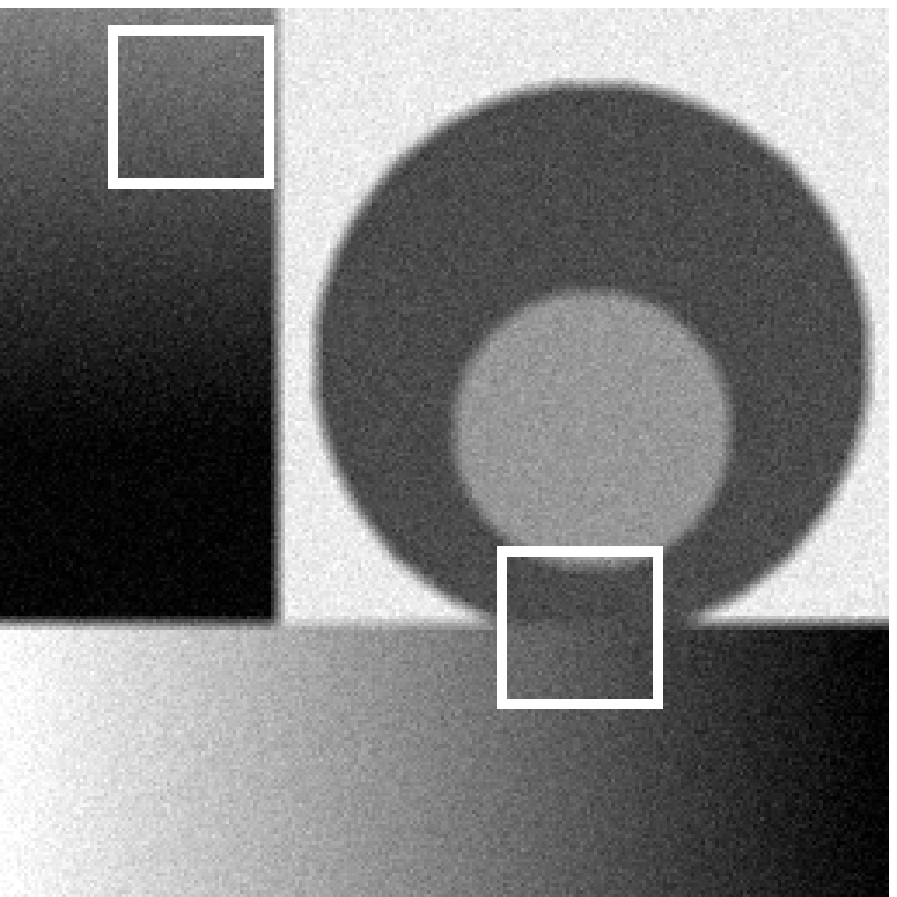}}&
\scalebox{0.5}{\includegraphics*[0,0][255,255]{./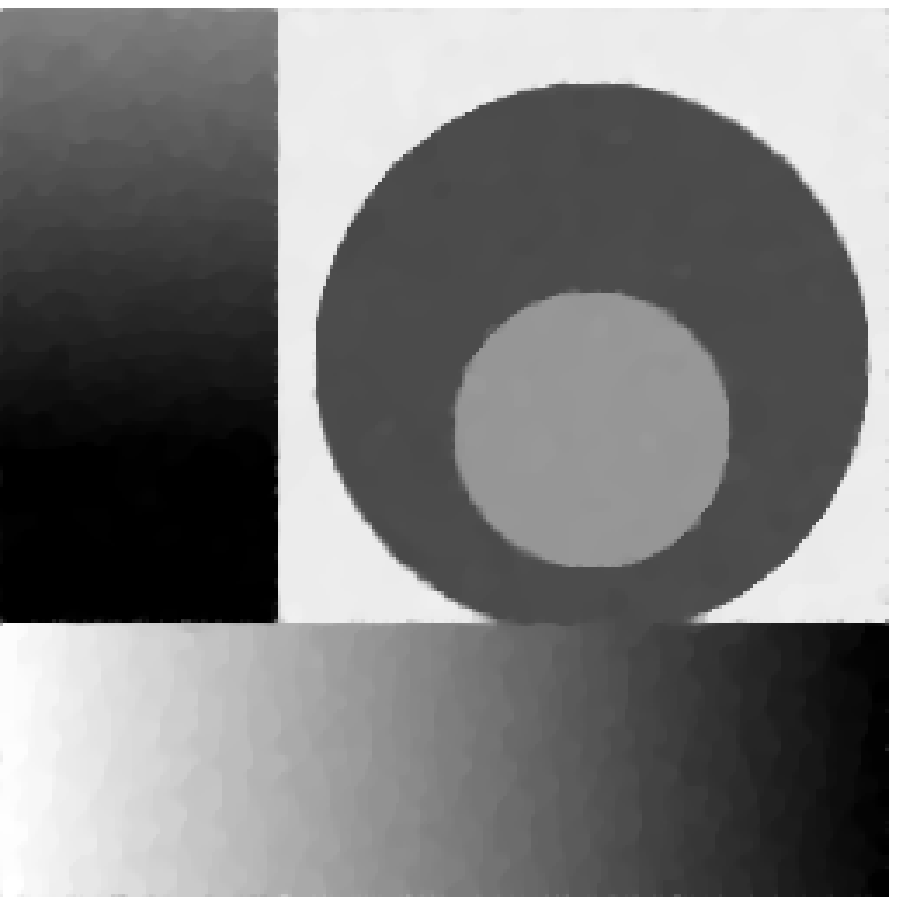}}\\
(a)&(b)\\
\scalebox{0.5}{\includegraphics*[0,0][255,255]{./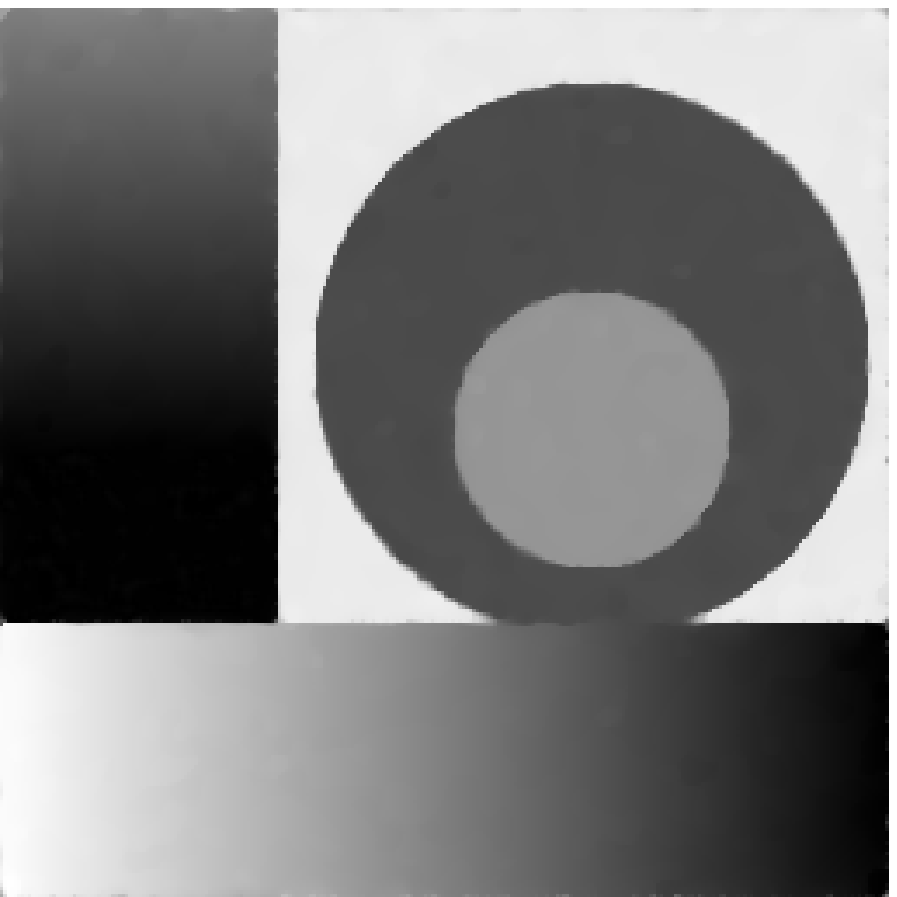}}&
\scalebox{0.5}{\includegraphics*[0,0][255,255]{./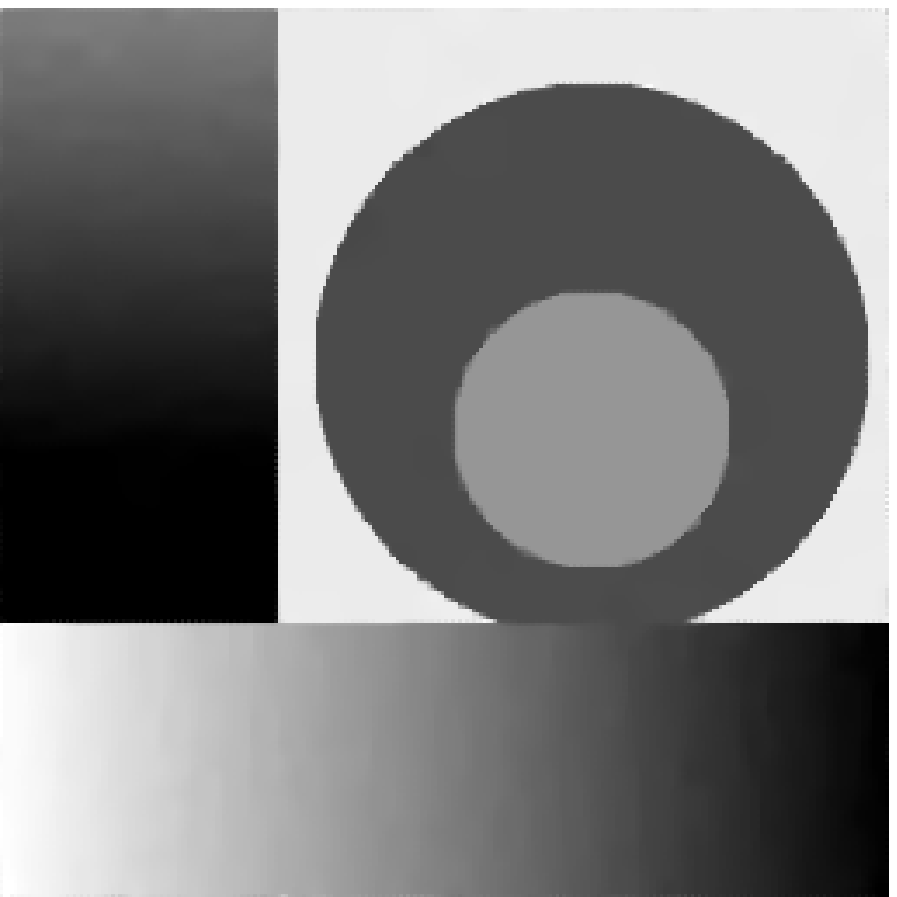}}\\
(c)&(d)
\end{tabular}
\caption{(a) Image blurred by kernel {\sc fspecial('average', [5:5])} and added Gaussian noise with $\sigma=0.03$;  Images reconstructed by (b) TV with $\alpha = 0.02$ (see (\ref{eq:finalmodel}) and (\ref{eq:TV})),
(c) TGV with $(\alpha_1,\alpha_2) = (0.0105,0.026)$ (see (\ref{eq:TGV-regu})),  and (d) TNTF with $\lambda = 0.0002$ (see (\ref{def:lambda})), respectively.}
\label{fig:circle_03}
\end{figure}

\begin{figure}[htbp]
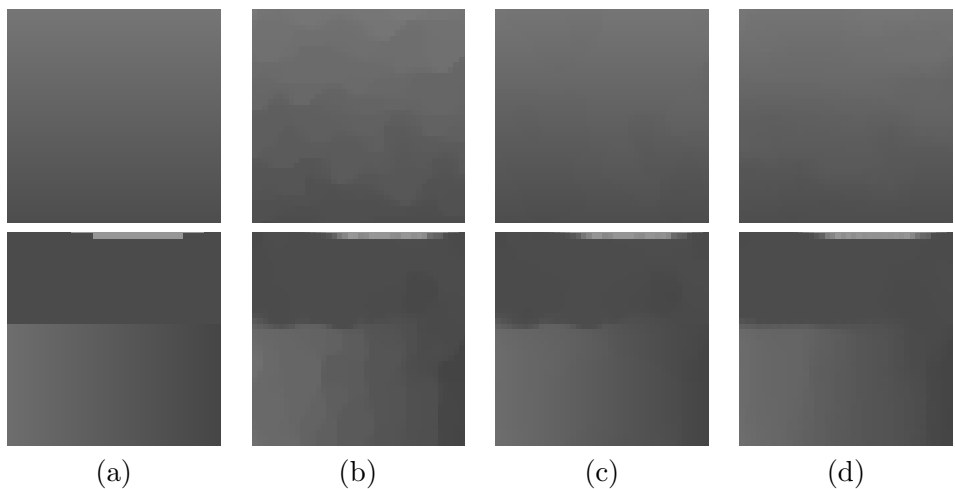

\centering
\begin{tabular}{cccc}
\scalebox{2.00}{\includegraphics*[34,206][74,246]{./Fig4.eps}}&
\scalebox{2.00}{\includegraphics*[34,206][74,246]{./Fig11.eps}}&
\scalebox{2.00}{\includegraphics*[34,206][74,246]{./Fig12.eps}}&
\scalebox{2.00}{\includegraphics*[34,206][74,246]{./Fig13.eps}}\\
\scalebox{2.00}{\includegraphics*[146,56][186,96]{./Fig4.eps}}&
\scalebox{2.00}{\includegraphics*[146,56][186,96]{./Fig11.eps}}&
\scalebox{2.00}{\includegraphics*[146,56][186,96]{./Fig12.eps}}&
\scalebox{2.00}{\includegraphics*[146,56][186,96]{./Fig13.eps}}\\
(a) & (b) & (c) & (d)
\end{tabular}
\caption{Two zoom-in parts of Fig.~\ref{fig:circle_03}: (a) Original image;  images reconstructed by (b) TV,
(c) TGV,  and (d) TNTF, respectively.}
\label{fig:circle_03_Zoomin4}
\end{figure}

Figure~\ref{fig:Cameraman_02}(a) is the blurred image of ``Cameraman'' corrupted by Gaussian noise of STD $\sigma$=0.02. The restored images by TV, TGV, and TNTF are displayed in Figure~\ref{fig:Cameraman_02}(b), (c), and (d), respectively. The regularization parameters for TV, TGV, and TNGV are $\alpha = 0.006$,  $(\alpha_1,\alpha_2) = (0.0035,0.0095)$, and $\lambda= 0.0004$, respectively. The structures of the building as well as the man are well preserved in the restored image by our TNTF. Block artifacts are clearly observed in the sky of the restored images by TV and TGV (see  Figure~\ref{fig:Cameraman_02}(b), (c)), but not in Figure~\ref{fig:Cameraman_02}(d). The zoom-in part of Figure~\ref{fig:Cameraman_02} is displayed in Figure~\ref{fig:Cameraman_zoomin}. The shape of the camera lens in the restored image by TNTF is more closer to the original one than that by TV and TGV.

\begin{figure}[htbp]
\centering
\begin{tabular}{cc}
\scalebox{0.50}{\includegraphics*[0,0][255,255]{./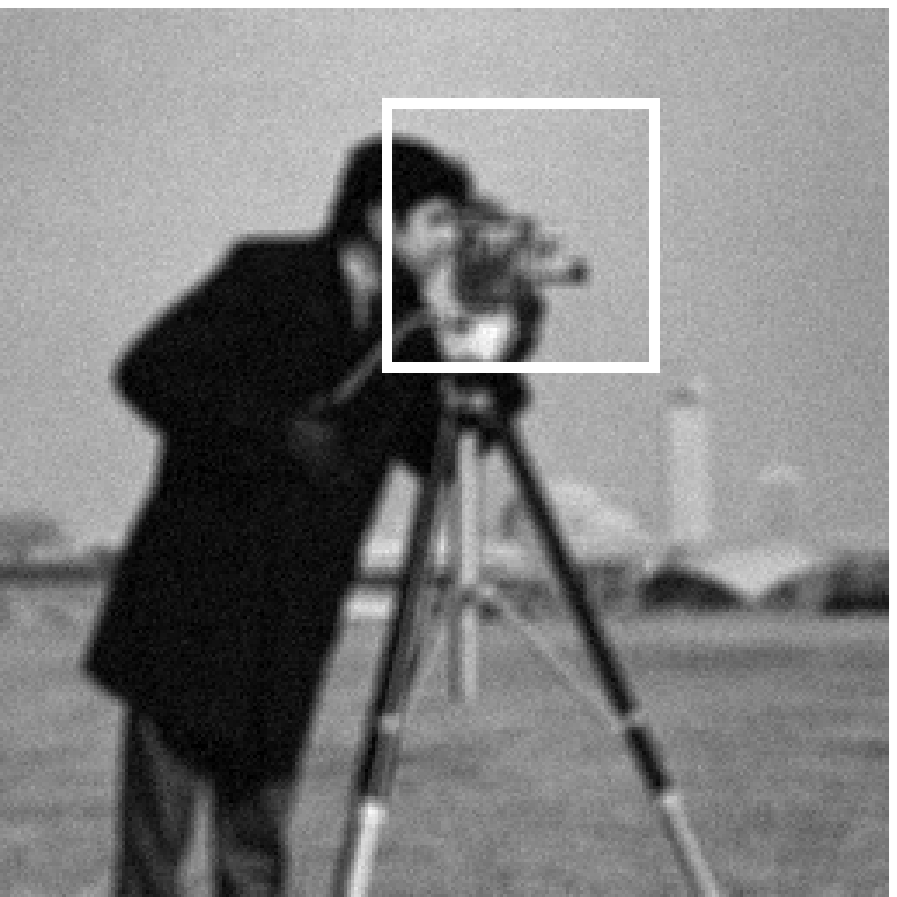}}&
\scalebox{0.50}{\includegraphics*[0,0][255,255]{./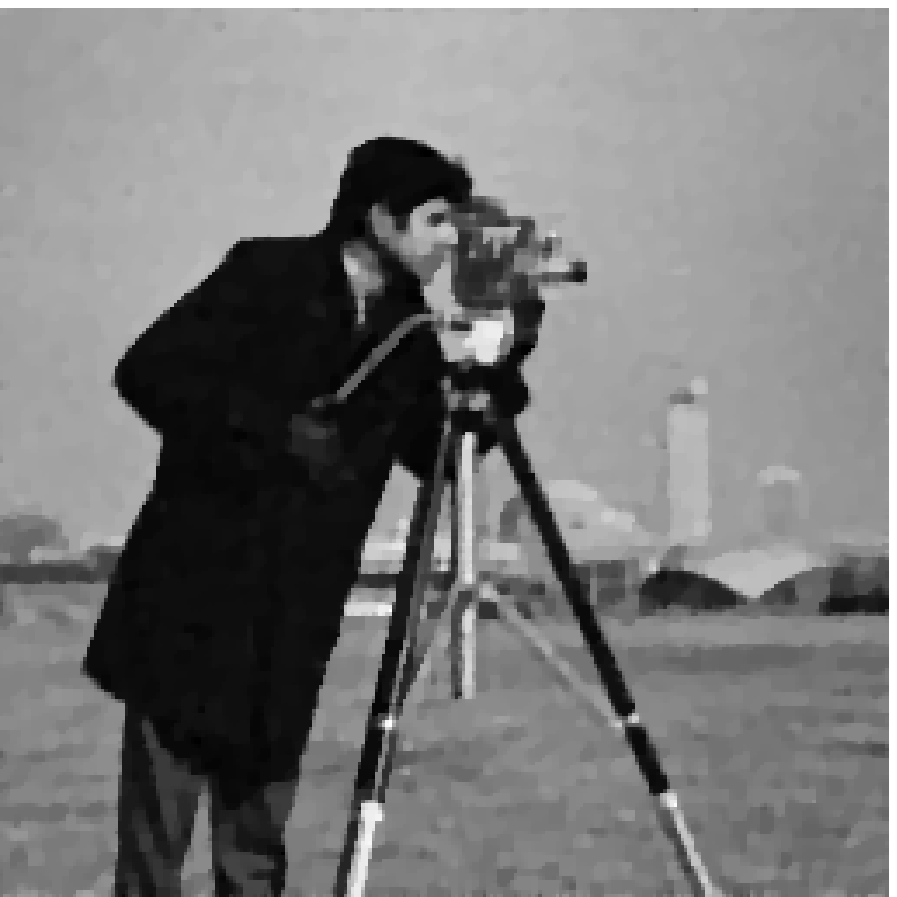}}\\
(a)&(b)\\
\scalebox{0.50}{\includegraphics*[0,0][255,255]{./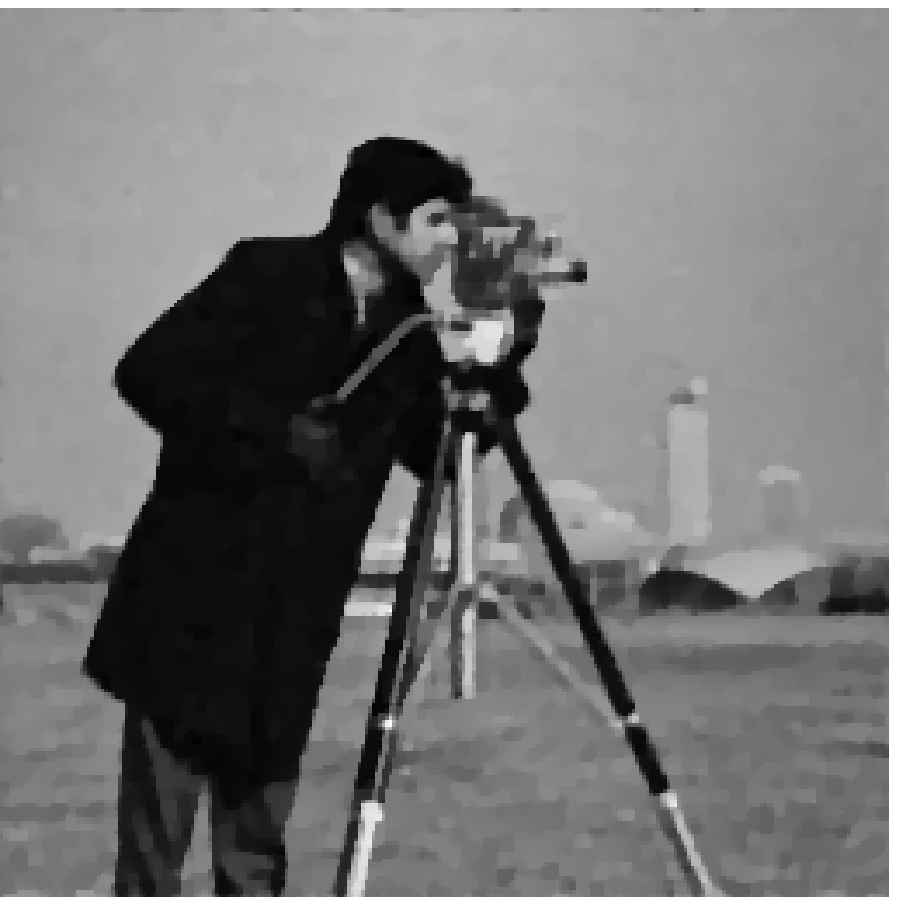}}&
\scalebox{0.50}{\includegraphics*[0,0][255,255]{./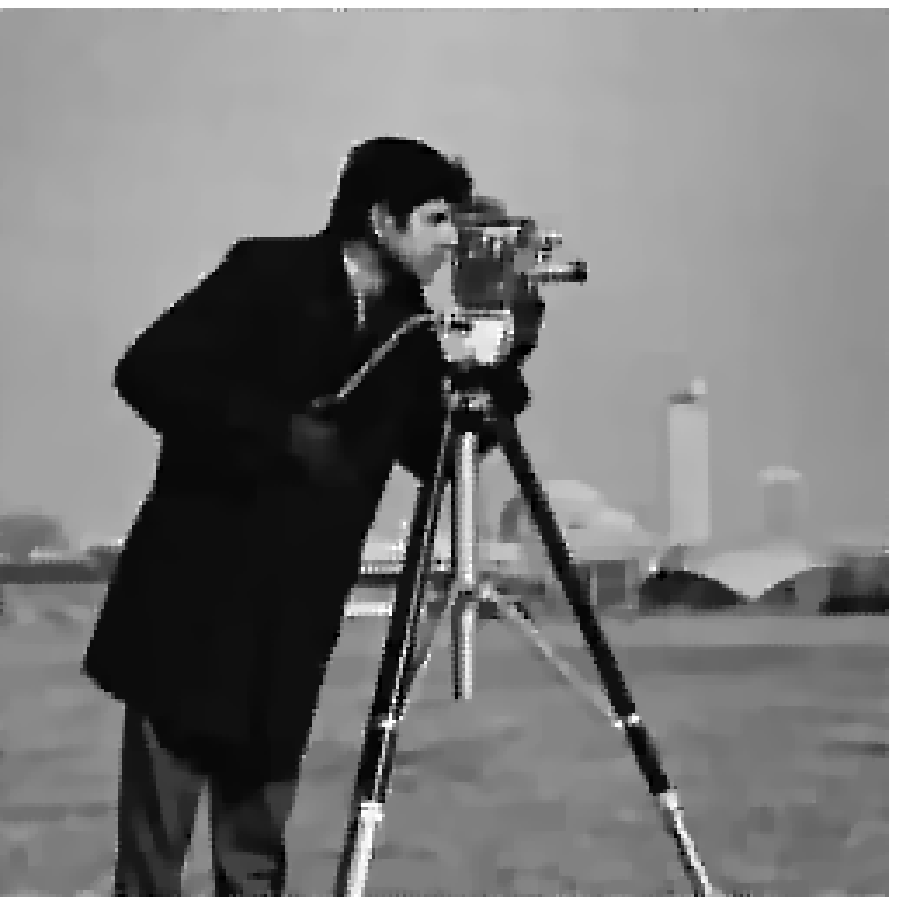}}\\
(c)&(d)
\end{tabular}
\caption{(a) Image blurred by kernel {\sc fspecial('average', [5:5])} and added Gaussian noise with $\sigma=0.02$; images reconstructed by (b) TV with $\alpha$ = 0.006;
(c) TGV with $(\alpha_1,\alpha_2) = (0.0035,0.0095)$,  and (d) TNTF with $\lambda = 0.0004$, respectively.}
\label{fig:Cameraman_02}
\end{figure}

\begin{figure}[htbp]
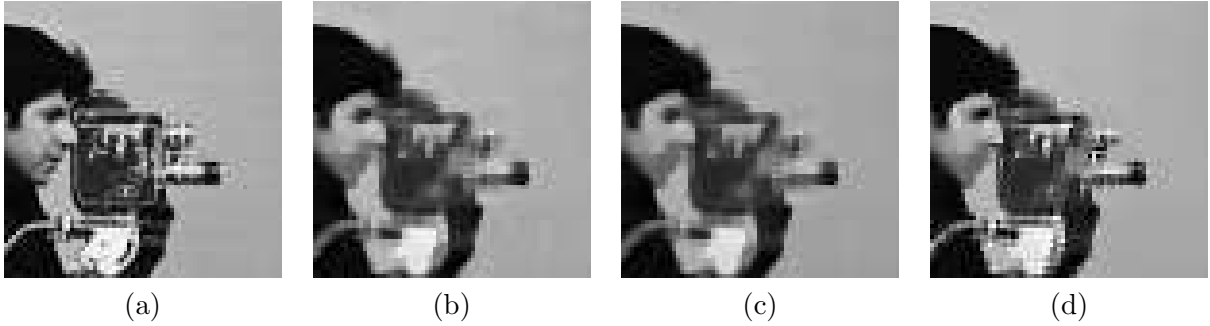

\centering
\begin{tabular}{cccc}
\scalebox{1.45}{\includegraphics*[113,153][185,225]{./Fig2.eps}}&
\scalebox{1.45}{\includegraphics*[113,153][185,225]{./Fig15.eps}}&
\scalebox{1.45}{\includegraphics*[113,153][185,225]{./Fig16.eps}}&
\scalebox{1.45}{\includegraphics*[113,153][185,225]{./Fig17.eps}}\\
(a)&(b)&(c)&(d)
\end{tabular}
\caption{Zoom-in parts of Figure~\ref{fig:Cameraman_02}: (a) Original image; images reconstructed by (b) TV,  (c) TGV,   and (d) TNTF.}
\label{fig:Cameraman_zoomin}
\end{figure}

Figure~\ref{fig:montage_04}(a) is the blurred image of ``Montage'' corrupted by Gaussian noise of STD $\sigma$=0.04. The restored images by TV, TGV, and TNTF are displayed in Figure~\ref{fig:montage_04}(b), (c), and (d), respectively. The regularization parameters for TV, TGV, and TNGV are $\alpha = 0.019$,  $(\alpha_1,\alpha_2) = (0.0085,0.0155)$, and $\lambda= 0.00015$, respectively. Severe artifacts appeared in Figure~\ref{fig:montage_04}(b) by TV, are significantly suppressed in Figure~\ref{fig:montage_04}(c) and (d) by TGV and TNTF. Two zoom-in parts of the results are shown in Figure~\ref{fig:montage_zoomin}. It is evident that the lines are well preserved in Figure~\ref{fig:montage_zoomin}(d) by TNTF.

\begin{figure}[htbp]
\centering
\begin{tabular}{cc}
\scalebox{0.50}{\includegraphics*[0,0][255,255]{./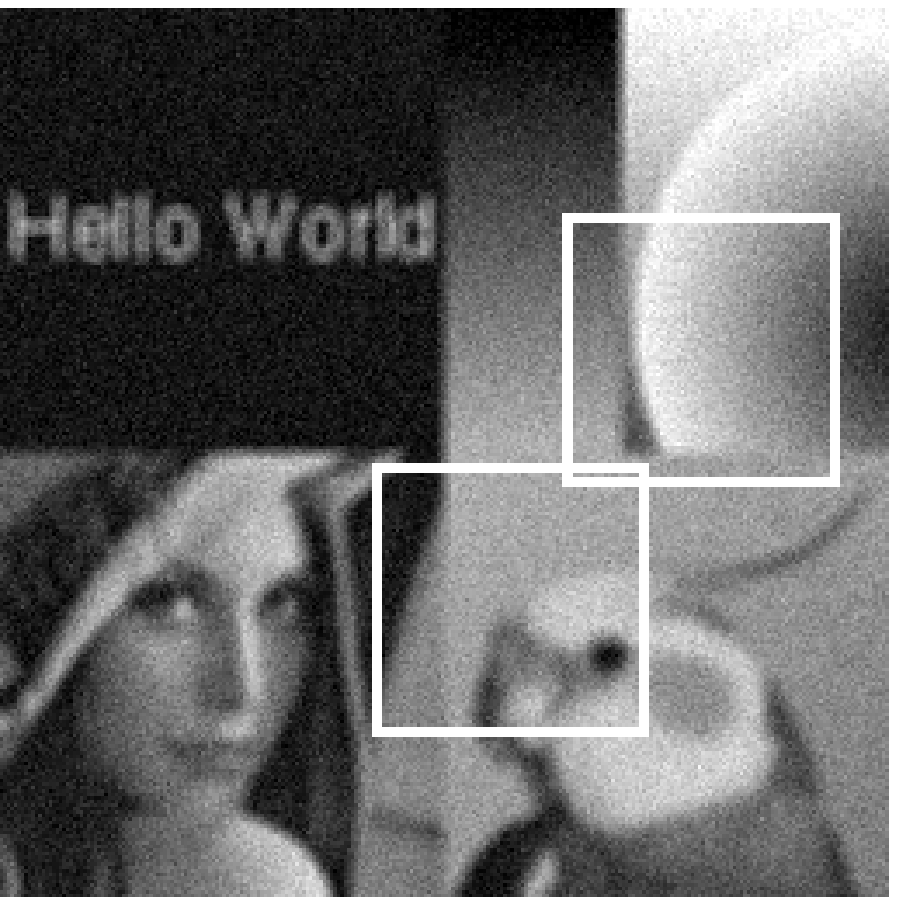}}&
\scalebox{0.50}{\includegraphics*[0,0][255,255]{./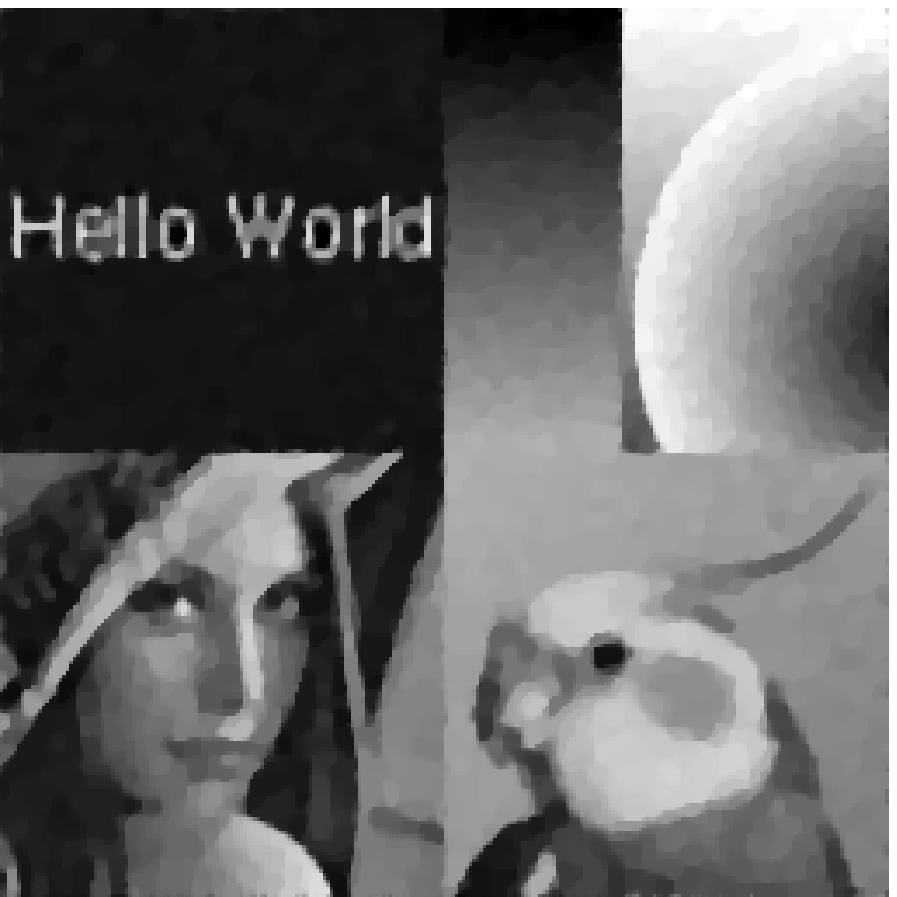}}\\
(a)&(b)\\
\scalebox{0.50}{\includegraphics*[0,0][255,255]{./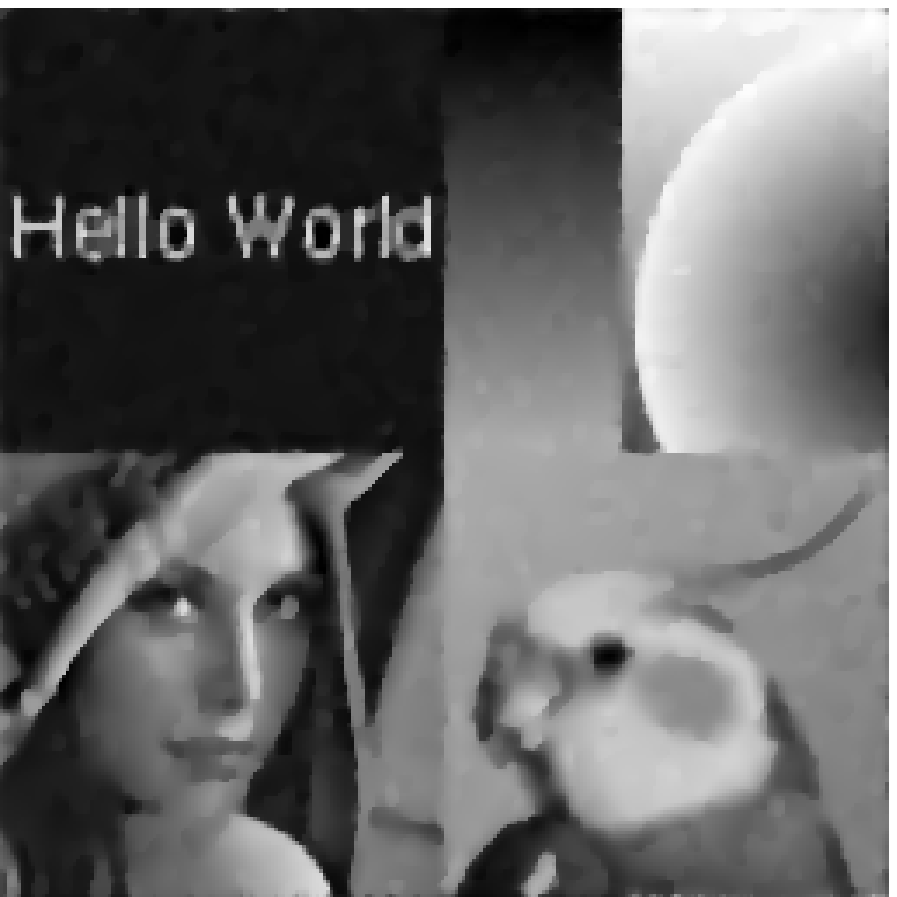}}&
\scalebox{0.50}{\includegraphics*[0,0][255,255]{./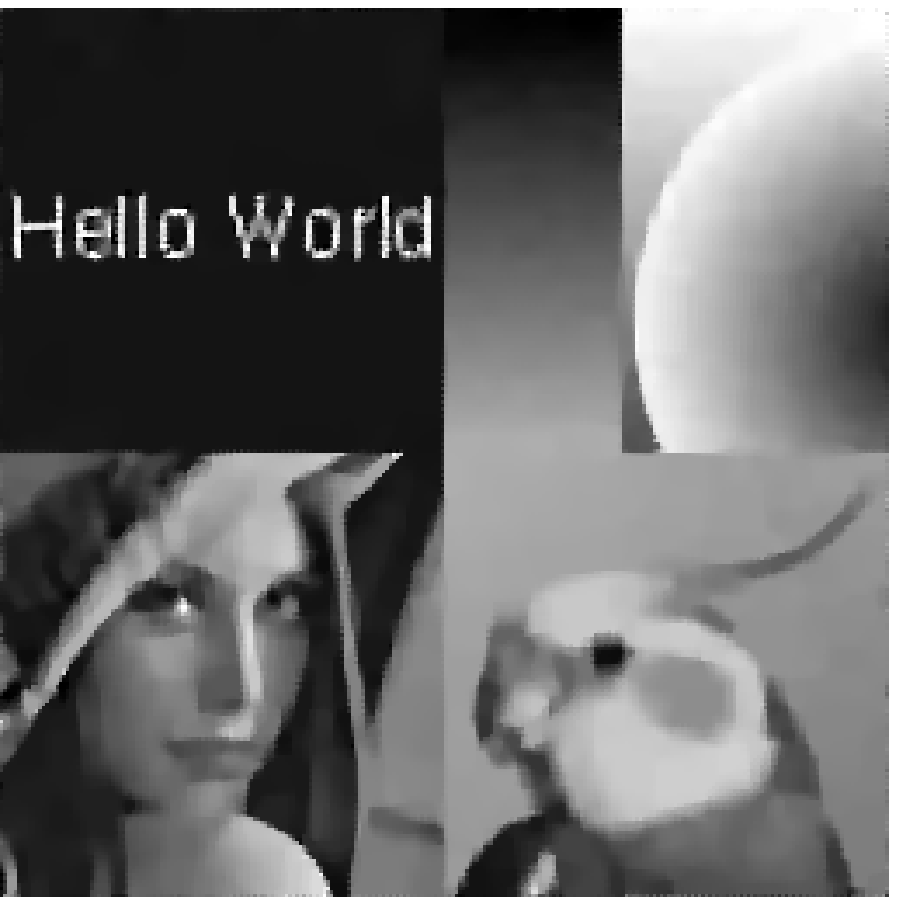}}\\
(c)&(d)
\end{tabular}
\caption{(a) Image blurred by kernel {\sc fspecial('average', [5:5])} and added Gaussian noise with $\sigma=0.04$; images reconstructed by (b) TV with $\alpha = 0.019$,
(c) TGV with $(\alpha_1,\alpha_2) = (0.0035,0.0095)$,  and (d) TNTF with $\lambda = 0.00015$.}
\label{fig:montage_04}
\end{figure}

\begin{figure}[htbp]
\centering
\begin{tabular}{cccc}
\scalebox{1.45}{\includegraphics*[110,48][182,120]{./Fig3.eps}}&
\scalebox{1.45}{\includegraphics*[110,48][182,120]{./Fig19.eps}}&
\scalebox{1.45}{\includegraphics*[110,48][182,120]{./Fig20.eps}}&
\scalebox{1.45}{\includegraphics*[110,48][182,120]{./Fig21.eps}}\\
\scalebox{1.45}{\includegraphics*[165,120][237,192]{./Fig3.eps}}&
\scalebox{1.45}{\includegraphics*[165,120][237,192]{./Fig19.eps}}&
\scalebox{1.45}{\includegraphics*[165,120][237,192]{./Fig20.eps}}&
\scalebox{1.45}{\includegraphics*[165,120][237,192]{./Fig21.eps}}\\
(a)&(b)& (c)&(d)
\end{tabular}
\caption{Two zoom-in parts of Fig.~\ref{fig:montage_04}: (a) Original image;  images reconstructed by (b) TV, (c) TGV,  and (d) TNTF.}
\label{fig:montage_zoomin}
\end{figure}

\section{Conclusion}
In this paper, we have designed a two-level non-stationary tight framelet system and utilized it in a regularization model for image restoration. This framelet system has the ability to capture the first and second order information of the image to be reconstructed. We developed an algorithm to solve the resulting optimization problem. The numerical experiments show the effectiveness of the proposed image restoration model and the corresponding algorithm.


\end{document}